\def\A{\mathcal{A}}
\def\C{\mathcal{C}}
\def\O{\mathcal{O}}
\def\S{\mathcal{S}}
\def\V{\mathbb{V}}
\def\muaup{\overline{\mu}}
\def\musup{\overline{\sigma}}
\def\sn{s\cdot n}
\def\d{\,{\rm d}}
\def\ds{\,{\rm ds}}
\def\dr{\,{\rm dr}}
\def\RR{\mathbb{R}}
\def\D{\R\times\S}
\def\dD{\Gamma}
\def\R{\mathcal{R}}
\def\dR{{\partial \R}}
\def\md{\mathcal{M}}
\newcommand\diam{\mathop{\rm diam}}
\newtheorem{theorem}{Theorem}[section]
\newtheorem{lemma}[theorem]{Lemma}
\newtheorem{corollary}[theorem]{Corollary}
\begin{document}
\title[Numerical identification for radiative transfer]{Numerical methods for parameter identification in stationary radiative transfer}
\date{\today}
\author[H. Egger]{Herbert Egger$^\dag$}
\author[M. Schlottbom]{Matthias Schlottbom$^\dag$}
\thanks{$^\dag$Numerical Analysis and Scientific Computing, Department of Mathematics, TU Darmstadt, Dolivostr. 15, 64293 Darmstadt. \\
Email: {\tt $\{$egger,schlottbom$\}$@mathematik.tu-darmstadt.de}}

\begin{abstract}
We consider the identification of scattering and absorption rates in the stationary radiative transfer equation. For a stable solution of this parameter identification problem, we consider Tikhonov regularization within Banach spaces. A regularized solution is then defined via an optimal control problem constrained by an integro partial differential equation. By establishing the weak-continuity of the parameter-to-solution map, we are able to ensure the existence of minimizers and thus the well-posedness of the regularization method. In addition, we prove certain differentiability properties, which allow us to construct numerical algorithms for finding the minimizers and to analyze their convergence. Numerical results are presented to support the theoretical findings and illustrate the necessity of the assumptions made in the analysis.
\end{abstract}

\maketitle

{\footnotesize
{\noindent \bf Keywords: parameter estimation, radiative transfer, Tikhonov regularization} 

}

{\footnotesize
\noindent {\bf AMS Subject Classification: 65M32, 35Q93, 49N45}  

}

\footnotetext{Department of Mathematics, Numerical Analysis and Scientific Computing, Technische Universit\"at Darmstadt, Dolivostr. 15, D--64293 Darmstadt, Germany.}

\section{Introduction}


We consider the stationary radiative transfer equation 
\begin{align}\label{eq:rte}
s\cdot\nabla \phi(r,s) + \mu(r) \phi(r,s) = \sigma(r) \Big( \int_\S \phi(r,s')\ds' - \phi(r,s)\Big) + f(r,s),
\end{align}
which models the equilibrium distribution of an ensemble of mutually non-interacting particles in an isotropically scattering medium. 
The function $\phi(r,s)$ here denotes the density of particles at a point $r\in\R$ moving in direction 
$s\in\S={\mathbb S}^{d-1}$, and the symbol $\nabla$ denotes derivatives with respect to the spatial variables $r$ only.
The medium is characterized by rates $\mu$ and $\sigma$ of absorption and scattering. 
Interior sources are represented by $f$ and the inflow of particles over the boundary 
is modeled by
\begin{align}\label{eq:rte_bc}
 \phi(r,s)=g(r,s) \qquad\text{for } r\in\dR, s\cdot n(r)<0,
\end{align}
where $n(r)$ is the unit outward normal at a point $r\in\dR$. 
Problems of the form \eqref{eq:rte}--\eqref{eq:rte_bc} arise in various applications, 
e.g., in neutron physics \cite{CaseZweifel67}, in medical imaging \cite{Arridge99}, 
in astrophysics \cite{Cercignani:1988,Chandrasekhar60,Peraiah04}, or climatology \cite{Kondratyev69}.

In this paper, we are interested in the determination of 
the material properties, encoded in the spatially varying parameters $\mu$ and $\sigma$
from measurements
\begin{align}\label{eq:obs}
 B\phi = \int_{s\cdot n(r)>0} \phi(r,s) \sn \ds
\end{align}
of the outflow of particles over the boundary. 
This parameter identification problem can be formally written as an abstract operator equation
\begin{align}\label{eq:ip_informal}
 B S(\mu,\sigma) = \md,
\end{align}
where $\md$ is a given measurement, and $S$ denotes the parameter-to-solution map defined by 
$S(\mu,\sigma)=\phi$ solving \eqref{eq:rte}--\eqref{eq:rte_bc}.
Note that $S$ and $\md$ also depend on the sources $f$ and $g$.

Due to the many important applications, the inverse problem \eqref{eq:ip_informal} has been investigated 
intensively in the literature. To give an impression of the basic properties of the problem, 
let us summarize some of the most important results:
The parameters $\mu$, $\sigma$ can be uniquely identified, if sufficiently many measurements are available
\cite{ChoulliStefanov98}.
In particular, multiple excitations $f$ and $g$ are required. The stability of the identification 
process with respect to perturbations in the data has been investigated in
\cite{Bal08,BalJol08,McDowallStefanovTamason2010}. In general, the stability will be very low.
Various methods to numerically solve the parameter identification problem 
have been proposed as well \cite{Dorn98,KloHie99,WrightArridgeSchweiger07}.

It is by now well understood that solving \eqref{eq:ip_informal} is an ill-posed problem.
For a stable solution, we will therefore consider Tikhonov regularization, to be precise, we
define approximate solutions via minimization problems of the form 
\begin{align}\label{eq:tikhonov}
  &\| BS(\mu,\sigma)- \md\|_{L^q(\dR)}^q  + \alpha \|\mu-\mu_0\|^p_{L^p(\R)} + \alpha \|\sigma-\sigma_0\|^p_{L^p(\R)} 
  \to \min_{(\mu,\sigma) \in D(S)},
\end{align}
where $\mu_0$ and $\sigma_0$ denote some a-priori information about the unknown parameters $\mu$, $\sigma$.
The domain $D(S)$ will be defined below. 
This can be seen as an optimal control problem governed by an integro partial differential equation.

The main focus of this manuscript is to establish the existence of minimizers for \eqref{eq:tikhonov}
and thus to ensure the well-posedness of the regularized problem.
We will also show that \eqref{eq:tikhonov} is a regularization method in the sense of \cite{EHN96}. 
In addition, we will investigate iterative algorithms to approximate the minimizers.  
The key ingredient for our arguments is a careful analysis of the mapping properties of the parameter-to-solution map $S$. We will establish its strong and weak continuity with respect to the corresponding $L^p$ and $L^q$ topologies, 
and derive various differentiability results. 
Let us mention that for particular choices of the parameter and measurement spaces, 
the stable solution of the inverse problem \eqref{eq:ip_informal} by Tikhonov regularization 
has been considered already in \cite{DiDoNaPaSi02,ThesisSch,TangHanHan2013}. 
Our results here are more general and require 
a much finer analysis of the operator $S$.
We will make more detailed comments on this in the following sections.
%
As a numerical method for minimizing the Tikhonov functional, we consider a variation of the iteratively regularized Gau\ss-Newton method. This method has been investigated in the framework of regularization methods in \cite{BakKok04,KaNeuSche08}. Here, we investigate its properties for minimization of the regularized functional.

The outline of the manuscript is as follows: in Section~\ref{sec:prelim}, we introduce the necessary notation and recall some existence results for the transport equation. 
After fixing the domain of $S$, we proof our main results about
continuity, weak continuity, and differentiability of $S$ in Section~\ref{sec:properties}. 
We turn back to the optimal control problem in Section~\ref{sec:OCproblem} and investigate iterative methods for its solution in Section~\ref{sec:iterative}. 
For illustration of our theoretical considerations, some numerical results are presented in Section~\ref{sec:numerics},
and we conclude with a short summary.

\section{Preliminaries} \label{sec:prelim}

Let us introduce the basic notions and the functional analytic setting in which we investigate the 
solvability of the radiative transfer problem.
The following physically reasonable and quite general assumptions will be used throughout the paper.
\begin{enumerate} \label{ass:1} \itemsep1ex
\item[(A1)] $\R\subset\RR^3$ is a bounded domain with Lipschitz boundary.
\item[(A2)] $\mu\in L^\infty(\R)$ and $0\leq \mu(r) \leq \muaup$ for a.e.\@ $r \in \R$ with some constant $\muaup \ge 0$.
\item[(A3)] $\sigma\in L^\infty(\R)$ and $0 \leq \sigma(r) \leq \musup$ for a.e.\@ $r \in \R$ with some constant $\musup \ge 0$.
\end{enumerate}
Since $\dR$ is Lipschitz continuous, we can define for almost every $r \in \dR$ the outward unit normal vector $n=n(r)$. We denote by $\dD:=\dR \times \S$ the boundary of the tensor product domain $\D$ and decompose $\dD$ into an in- and outflow part by
\begin{align}\label{eq:def_dD}
 \dD_{\pm} :=\{(r,s)\in\dR\times \S: \pm s\cdot n(r) >0\}.
\end{align}
We will search for solutions of the radiative transfer problem \eqref{eq:rte}--\eqref{eq:rte_bc} in the space
\begin{align*}
 \V^p &:= \{ v\in L^p(\D): s\cdot \nabla v\in L^p(\D) \text{ and } v\in L^p(\dD_-; |\sn|)\}
\end{align*}
which is equipped with the graph norm
\begin{align*}
 \| v\|^p_{\V^p} &:= \|v\|^p_{L^p(\D)} + \|s\cdot\nabla v\|^p_{L^p(\D)} + \|v\|_{L^p(\dD_-;|\sn|)}^p.
\end{align*}
Here $L^p(\dD_-;|\sn|)$ denotes a weighted $L^p$-space with weighting function $|\sn|$.
Note that for $1 \le p \le \infty$, the spaces $\V^p$ are complete and that $\V^2$ is a Hilbert space. 
Due to the boundedness of the spatial domain $\R$, the embedding $\V^p \hookrightarrow \V^q$ is continuous for $q \le p$, but neither $\V^p \hookrightarrow \V^q$ nor $\V^p \hookrightarrow L^p(\D)$ are compact.
For functions  $u\in \V^p$ and $v \in \V^q$ with $q=1-1/p$, we obtain the integration-by-parts formula
\begin{align} \label{eq:green}
(s \cdot \nabla u, v)_{\D} = -(u, s \cdot \nabla v)_{\D} + (s \cdot n\; u, v)_{\dD}.
\end{align}
As usual, the symbol $(u,v)_D$ is used for the integral of the product of two functions over some domain $D$.
Applying this formula to $u \in \V^p$ and $v = u |u|^{p-2}$ yields
\begin{align} \label{eq:outflow}
\|u\|_{L^p(\dD_+;|s \cdot n|)}^p 
\le \|u\|_{L^p(\dD_;|s \cdot n|)}^p  + p\|u\|_{L^p(\D)} \|s \cdot \nabla u \|_{L^p(\D)},
\end{align}
i.e., the outflow trace of functions in $\V^p$ is well-defined and the trace operator is continuous from $\V^p$ to $L^p(\dD_+;|\sn|)$. 
Via H\"older's inequality, we immediately obtain
\begin{lemma} \label{lem:obs}
The operator $B:\V^p \to L^p(\dD_+;|\sn|)$ defined in \eqref{eq:obs} is linear and bounded.
\end{lemma}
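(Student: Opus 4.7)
The plan is to deduce the lemma from two ingredients that are either trivial or already in hand: Hölder's inequality in the angular variable $s$, and the continuity of the outflow trace established in \eqref{eq:outflow}. Linearity of $B$ is immediate since the integral defining it is linear in $\phi$, so the whole content of the statement is the norm bound.

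For boundedness I would first work pointwise in $r\in\dR$. Writing $|s\cdot n(r)| = |s\cdot n(r)|^{1/p}\cdot|s\cdot n(r)|^{1/q}$ with $1/p+1/q=1$ and applying Hölder's inequality in $s$ gives
\begin{align*}
|B\phi(r)|^p \le \Bigl(\int_{s\cdot n(r)>0} |s\cdot n(r)|\,ds\Bigr)^{p/q} \int_{s\cdot n(r)>0} |\phi(r,s)|^p\,|s\cdot n(r)|\,ds.
\end{align*}
The first factor is a constant $C_d$ depending only on the dimension, because rotations of $\S$ are isometries and $n(r)$ is a unit vector, so $\int_\S |s\cdot n(r)|\,ds$ does not depend on $r$. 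Since $B\phi$ is independent of $s$, this estimate together with another integration of the constant weight $|s\cdot n|$ over $\S$ yields
\begin{align*}
\|B\phi\|_{L^p(\dD_+;|\sn|)}^p \le C_d' \,\|\phi\|_{L^p(\dD_+;|\sn|)}^p.
\end{align*}

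The second step is to invoke \eqref{eq:outflow}, which provides continuity of the outflow trace $\V^p \to L^p(\dD_+;|\sn|)$. Combining with the previous display and using the elementary inequality $ab \le \frac{1}{p}a^p+\frac{1}{q}b^q$ to absorb the mixed term on the right of \eqref{eq:outflow} gives a bound of the form $\|\phi\|_{L^p(\dD_+;|\sn|)}^p \le C\bigl(\|\phi\|_{L^p(\dD_-;|\sn|)}^p + \|\phi\|_{L^p(\D)}^p + \|s\cdot\nabla\phi\|_{L^p(\D)}^p\bigr) \le C\|\phi\|_{\V^p}^p$, which together with the pointwise Hölder estimate produces the desired continuity of $B$.

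There is really no serious obstacle here: the statement is a routine consequence of Hölder's inequality plus a trace inequality that has already been set up. The only mild subtleties are to verify that the angular constant $\int_\S |s\cdot n(r)|\,ds$ is independent of $r\in\dR$ (by rotational symmetry of $\S$), and to interpret the range of $B$ correctly — $B\phi$ is a function of $r$ alone, and its $L^p(\dD_+;|\sn|)$-norm is equivalent to its $L^p(\dR)$-norm up to a dimensional constant.
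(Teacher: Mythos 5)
Your proof is correct and follows essentially the same route as the paper, whose entire argument is the remark that the lemma follows ``via H\"older's inequality'' from the already-established trace continuity \eqref{eq:outflow}; you simply fill in the pointwise H\"older step in $s$ and the reduction of the $L^p(\dD_+;|\sn|)$-norm of the $s$-independent function $B\phi$ to its $L^p(\dR)$-norm. (The only cosmetic quibble is that Young's inequality applied to the mixed term in \eqref{eq:outflow} does not literally yield $p$-th powers of both factors, but boundedness of the linear map follows anyway by restricting to the unit sphere of $\V^p$ and using homogeneity.)
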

Let us introduce the transport operator
\begin{align*}
 \A:\V^p \to L^p(\D), \quad (\A\phi)(r,s):= s\cdot\nabla\phi(r,s)
\end{align*}
which models the flow of particles in direction $s$, and the averaging operator
\begin{align*}
 \Theta:L^p(\D)   \to    L^p(\D), \quad (\Theta\phi)(r,s):= \int_\S \phi(r,s')\ds',
\end{align*}
describing the scattering of particles by the background medium. 
The collision operator 
\begin{align*}
\C = \mu I + \sigma (I - \Theta)
\end{align*}
then models the total interaction of particles with the medium. 
Note, that $\C$ depends linearly on the parameters $\mu$ and $\sigma$, and we will sometimes write $\C(\mu,\sigma)$
to emphasize this dependence.
For later reference, let us summarize some basic properties of the operators, 
which follow more or less directly from their definition; see \cite{DL93vol6,EggSch10:3} for details.
\begin{lemma}
Let (A1)--(A3) hold. 
Then the operators $\A :\V^p\to L^p(\D)$, $\Theta: L^p(\D)\to L^p(\D)$, and $\C:L^p(\D) \to L^p(\D)$ are bounded linear operators. Moreover, $\Theta$ and $\C$ are self-adjoint and $\C$ is positive on $L^2(\D)$.
\end{lemma}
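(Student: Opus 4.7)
The plan is to verify the five assertions separately; all of them reduce to short computations from the definitions, so I organize the argument as boundedness, self-adjointness, and positivity.

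Boundedness of $\A$ is immediate from the definition of the graph norm: $\|\A\phi\|_{L^p(\D)} = \|s\cdot\nabla\phi\|_{L^p(\D)}\le\|\phi\|_{\V^p}$, so $\A$ has operator norm at most one. For $\Theta$, I would apply H\"older's inequality pointwise in $r$ to the integral $\int_\S\phi(r,s')\ds'$ with conjugate exponents $p$ and $p/(p-1)$, raise to the $p$-th power, and integrate over $(r,s)$; Fubini then yields $\|\Theta\phi\|_{L^p(\D)}\le c\,\|\phi\|_{L^p(\D)}$ with a constant $c$ depending only on $|\S|$. Combining these two estimates with the $L^\infty$-bounds $\muaup,\musup$ from (A2)--(A3) gives boundedness of $\C=\mu I + \sigma(I-\Theta)$ on $L^p(\D)$.

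For self-adjointness on $L^2(\D)$ I would apply Fubini to $(\Theta\phi,\psi)_\D = \int_\R\bigl(\int_\S\phi\ds'\bigr)\bigl(\int_\S\psi\ds\bigr)dr$ and observe the expression is symmetric in $\phi,\psi$. For $\C$, note that $\mu I$ and $\sigma I$ are self-adjoint multiplication operators since $\mu,\sigma$ are real-valued, and they commute with $\Theta$ because $\mu,\sigma$ depend only on $r$ while $\Theta$ acts only on $s$; hence $\sigma\Theta$ is self-adjoint as a product of commuting self-adjoint bounded operators, and $\C=\mu I + \sigma I - \sigma\Theta$ inherits self-adjointness.

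For positivity on $L^2(\D)$, testing with $\phi$ and using that $\Theta\phi$ is independent of $s$ (so that $\int_\S \phi\cdot \Theta\phi\,\ds = (\Theta\phi)^2$) yields
\[
(\C\phi,\phi)_\D \;=\; \int_\R \mu\!\int_\S|\phi|^2\ds\,dr \;+\; \int_\R \sigma\Big(\int_\S|\phi|^2\ds - (\Theta\phi)^2\Big)dr.
\]
Under the convention that the angular measure is normalized ($\int_\S\ds = 1$, implicit in \eqref{eq:rte}), Jensen's inequality gives $(\Theta\phi)^2\le \int_\S|\phi|^2\ds$ pointwise in $r$, and nonnegativity of $\mu,\sigma$ closes the estimate. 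The whole argument is essentially unpacking of definitions, so I do not foresee any substantial obstacle; the only subtle point is the normalization of the angular measure, without which the pointwise Jensen bound in the positivity step (and hence positivity of $\C$) would fail.
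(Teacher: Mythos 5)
Your proof is correct and is precisely the direct verification from the definitions that the paper alludes to (the paper itself gives no proof of this lemma, only a pointer to the references). Your closing caveat is well taken and is the only genuinely delicate point: positivity of $\C$ on $L^2(\D)$ requires the angular measure to be normalized so that $\int_\S \ds = 1$ (equivalently, that $\Theta$ carries a factor $1/|\S|$), since otherwise testing with a direction-independent $\phi$ gives $\sigma(\phi-\Theta\phi)\phi = \sigma(1-|\S|)|\phi|^2 < 0$; this normalization is the standard convention implicit in \eqref{eq:rte}.
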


As already mentioned, the energy spaces $\V^p$ are not compactly embedded in $L^p(\D)$. 
The following result, known as averaging lemma, serves as a substitute and will play a key-role in 
our analysis. 
\begin{lemma}\label{lem:scatteringcompactness}
  For any $1<p<\infty$ the averaging operator $\Theta:\V_0^p\to L^p(\R)$ is compact.
 Here $\V_0^p$ denotes the subspace of $\V^p$ with vanishing inflow boundary conditions.
\end{lemma}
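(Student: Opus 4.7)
My plan is to reduce this to the classical velocity averaging lemma of Golse--Lions--Perthame--Sentis (stated for velocities on the sphere, see \cite{DL93vol6}) by constructing a characteristic extension that lifts functions in $\V_0^p(\R \times \S)$ to compactly supported functions on $\RR^3 \times \S$, while controlling both the $L^p$-norm and the transport derivative. Once such an extension is in place, the averaging lemma produces $W^{s,p}$-regularity with some $s>0$ for the averages, and compact embedding into $L^p_{\mathrm{loc}}(\RR^3)$ yields the required strong convergence after restriction to $\R$.

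I would start by noting that $\V^p$ is reflexive, being isometric to a closed subspace of the reflexive product $L^p(\D) \oplus L^p(\D) \oplus L^p(\dD_-;|\sn|)$ via $v \mapsto (v,\,s\cdot\nabla v,\,v|_{\dD_-})$, so the closed subspace $\V_0^p$ is reflexive as well. For a bounded sequence $(\phi_n) \subset \V_0^p$, I extract a subsequence with $\phi_n \rightharpoonup \phi$ in $\V_0^p$, and by linearity reduce to the case $\phi = 0$.

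For the extension, fix a smooth cutoff $\chi:[0,\infty) \to [0,1]$ with $\chi(0) = 1$ and support in $[0,\delta]$. For $r \in \RR^3 \setminus \R$ and $s \in \S$ such that the backward ray $\{r-ts: t > 0\}$ hits $\R$, let $\tau(r,s)$ be the first such time; the hit point $r - \tau s$ then lies on the outflow boundary relative to $s$. Set
\begin{align*}
\tilde\phi_n(r,s) := \chi(\tau(r,s))\,\phi_n\bigl(r-\tau(r,s)\,s,\,s\bigr)
\end{align*}
on this \emph{downstream} region, and $\tilde\phi_n(r,s) := 0$ elsewhere. The vanishing inflow condition $\phi_n|_{\dD_-}=0$ makes $\tilde\phi_n$ continuous across $\dD_-$, while the boundary of the downstream set away from $\dR$ consists of rays tangent to $\dR$ in direction $s$, so the tangential jump contributes nothing to the distributional derivative $s\cdot\nabla\tilde\phi_n$. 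A direct computation based on $s \cdot \nabla \tau \equiv 1$ on the extension region gives $s\cdot\nabla\tilde\phi_n(r,s) = \chi'(\tau)\,\phi_n(r-\tau s, s)$ there, and a change of variables to boundary coordinates, combined with the outflow trace estimate \eqref{eq:outflow}, yields
\begin{align*}
\|\tilde\phi_n\|_{L^p(\RR^3\times\S)} + \|s\cdot\nabla\tilde\phi_n\|_{L^p(\RR^3\times\S)} \le C\,\|\phi_n\|_{\V^p}.
\end{align*}
Since $\phi_n \rightharpoonup 0$ in $\V_0^p$, both $\tilde\phi_n$ and $s\cdot\nabla\tilde\phi_n$ converge weakly to zero in $L^p(\RR^3 \times \S)$, and the averaging lemma then delivers $\Theta\tilde\phi_n \to 0$ strongly in $L^p_{\mathrm{loc}}(\RR^3)$; restricting to $\R$ finishes the argument.

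The main technical obstacle I expect is the extension construction together with the bound on its transport derivative. The vanishing inflow condition is crucial for the continuity across $\dD_-$, and the outflow trace inequality \eqref{eq:outflow} is precisely what makes the $L^p$-estimate on $s\cdot\nabla\tilde\phi_n$ close up.
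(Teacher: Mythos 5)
The paper offers no proof of this lemma at all---it simply cites Golse--Lions--Perthame--Sentis---so your proposal is a genuine attempt to supply one, by lifting the bounded-domain statement to the whole-space averaging lemma through an extension along outgoing characteristics. Much of it is correct: the reflexivity of $\V_0^p$ and the reduction to $\phi_n\rightharpoonup 0$, the identity $s\cdot\nabla\tau=1$ on the downstream set, and the bound $\|\tilde\phi_n\|_{L^p}+\|s\cdot\nabla\tilde\phi_n\|_{L^p}\le C\|\phi_n\|_{\V^p}$ via the change of variables $r=y+ts$, $dr=|s\cdot n(y)|\,d\sigma(y)\,dt$, together with the outflow trace estimate \eqref{eq:outflow}, all check out. (A minor caveat: the $L^p$ form of the averaging lemma for general $1<p<\infty$ is really due to later refinements of \cite{GoLiPeSe88}, though the paper itself cites the same source.)

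The genuine gap is the claim that $s\cdot\nabla\tilde\phi_n$ has no singular part. That is true when $\R$ is convex, but (A1) only assumes a bounded Lipschitz domain. For non-convex $\R$, a characteristic leaving through an outflow point $y_1\in\dR$ with $s\cdot n(y_1)>0$ can re-enter $\R$ at an inflow point $y_0$ with exterior gap $T=|y_1-y_0|<\delta$; on the exterior segment between them your extension equals $\chi(t)\,\phi_n(y_1,s)$, so its one-sided limit at $y_0$ is $\chi(T)\,\phi_n(y_1,s)\neq 0$ in general, while the interior trace at $(y_0,s)\in\dD_-$ vanishes. Thus part of the boundary of the downstream set lies on $\dD_-$ and carries a nonzero jump, so the distributional derivative $s\cdot\nabla\tilde\phi_n$ contains a surface measure $(s\cdot n)[\tilde\phi_n]\,d\sigma$ there; shrinking $\delta$ does not help because the gap $T$ can be arbitrarily small. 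Consequently $s\cdot\nabla\tilde\phi_n$ is not bounded in $L^p(\RR^3\times\S)$ and the whole-space averaging lemma cannot be applied. Your argument is complete for convex domains; in general you should either invoke a refined averaging lemma that tolerates measure-valued right-hand sides, or take the classical route: write $\phi_n=(I+\A)^{-1}g_n$ with $g_n=\phi_n+s\cdot\nabla\phi_n$ bounded in $L^p(\D)$, integrate along characteristics only up to the first backward exit time, and observe that $\Theta(I+\A)^{-1}$ is an integral operator on $\R$ with weakly singular kernel of order $|r-r'|^{-2}$, hence compact on $L^p$ for $1<p<\infty$. The truncation at the exit time is precisely what eliminates the re-entry problem your extension runs into.
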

We refer to \cite{GoLiPeSe88} for a proof of this result.
Let us mention that averaging lemmas also play a key role for the spectral analysis of the radiative transfer equation.

Using the operators defined above,  
the radiative transfer problem \eqref{eq:rte}--\eqref{eq:rte_bc} can be written in compact form: 
Given $f \in L^p(\D)$ and $g \in L^p(\dD_-;|\sn|)$, find $\phi \in \V^p$ such that 
\begin{alignat}{3}  \label{eq:op_eq} 
\A \phi + \C \phi &= f &\quad&\text{in } \D, & \\
 \phi &= g &\quad &\text{on } \dD_-. \label{eq:op_bc}
\end{alignat}
The two equations have to hold in the sense of $L^p(\D)$ and $L^p(\dD_-;|s\cdot n|)$, respectively. 
The existence and uniqueness of solutions for this problem is established next. 
\begin{theorem}\label{thm:existence}
Let (A1)--(A3) hold. Then for any $f\in L^p(\D)$ and $g\in L^p(\dD_-; |s\cdot n|)$, $1\leq p \leq\infty$, 
the radiative transfer problem \eqref{eq:op_eq}--\eqref{eq:op_bc} has a unique solution $\phi\in \V^p$ 
and 
 \begin{align*}
  \|\phi\|_{\V^p}\leq C \big(\|f\|_{L^p(\D)} + \|g\|_{L^p(\dD_-;|s\cdot n|)}\big)
 \end{align*}
 with a constant $C$ depending only on $\diam\R$, $p$ and the bounds $\overline{\mu}$ and $\overline{\sigma}$ in (A2)--(A3).
\end{theorem}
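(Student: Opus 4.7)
The plan is to first solve the pure absorption problem (ignoring the $\sigma\Theta$-term), handle it via the method of characteristics, and then recover the scattering coupling through a fixed-point / Fredholm argument. Uniqueness will follow at the end by applying the resulting a priori estimate to the difference of two candidate solutions.

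For the pure absorption problem $\A\psi + (\mu+\sigma)\psi = h$ with $\psi|_{\dD_-}=g$, introduce the backward exit time $\tau(r,s):=\sup\{t>0: r-us\in \R\text{ for all }u\in[0,t]\}$, which is bounded by $\diam\R$. The unique $\V^p$-solution can be written explicitly as
\[
\psi(r,s)=g(r-\tau s,s)\,E_\tau(r,s)+\int_0^{\tau(r,s)} h(r-ts,s)\,E_t(r,s)\,dt,\qquad E_t(r,s):=e^{-\int_0^t(\mu+\sigma)(r-us)\,du}.
\]
Since $0\le E_t\le 1$ and $\tau\le\diam\R$, a Fubini / change-of-variables argument along characteristics yields the bound $\|\psi\|_{L^p(\D)}\le C\bigl(\|h\|_{L^p(\D)}+\|g\|_{L^p(\dD_-;|\sn|)}\bigr)$ with $C=C(\diam\R,p)$. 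Reading $\A\psi=h-(\mu+\sigma)\psi$ off the equation bounds $\|\A\psi\|_{L^p(\D)}$, and the trace estimate \eqref{eq:outflow} supplies the outflow contribution, so the full $\V^p$-estimate follows. Let $T:L^p(\D)\times L^p(\dD_-;|\sn|)\to\V^p$ denote the resulting bounded linear solution operator.

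For the full problem, I would recast \eqref{eq:op_eq}--\eqref{eq:op_bc} as the fixed-point equation $(I-K)\phi=T(f,g)$ in $L^p(\D)$, where $K\phi:=T(\sigma\Theta\phi,0)$. The operator $K$ itself is bounded but may fail to be compact on $L^p(\D)$; however, $K^2$ is compact, since the intermediate function $T(\sigma\Theta\phi,0)$ lies in $\V_0^p$ and applying $\Theta$ to it invokes the averaging compactness of Lemma~\ref{lem:scatteringcompactness}. Consequently $I-K^2=(I-K)(I+K)$, and hence $I-K$, is Fredholm of index zero, so bounded invertibility reduces to triviality of $\ker(I-K)$, an $L^p$-uniqueness statement for the homogeneous equation. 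I would establish this by testing the homogeneous equation against $|\phi|^{p-2}\phi$, applying \eqref{eq:green}, and exploiting the dissipativity of $\C$; the endpoints $p\in\{1,\infty\}$ are handled by a duality/density argument reducing to the reflexive case. The announced estimate then follows by combining the boundedness of $(I-K)^{-1}$ with the Step-1 estimate applied to $T(f,g)$.

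I expect the hardest step to be the $L^p$-uniqueness for $p\ne 2$: the dissipativity of $\C$ is most transparent on $L^2$ via self-adjointness and positivity, and transferring it to general $L^p$ requires a Jensen/H\"older estimate on $\S$ to control $(\sigma\Theta\phi,|\phi|^{p-2}\phi)_\D$ by $(\sigma\phi,|\phi|^{p-2}\phi)_\D$, whose multiplicative constant must be handled carefully so as not to destroy the absorption term. A technically cleaner alternative that avoids Fredholm theory altogether is to prove direct contractivity of $K$ in an exponentially weighted $L^p$-norm adapted to the characteristics — equivalently, to sum the Neumann series $(I-K)^{-1}=\sum_{n\ge 0}K^n$ — which both constructs $\phi$ explicitly and renders the dependence of $C$ on $\diam\R$, $p$, $\muaup$, $\musup$ fully transparent, matching the assertion of the theorem.
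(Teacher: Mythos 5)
The paper itself does not prove Theorem~\ref{thm:existence}; it refers to \cite{DL93vol6,EggSch13LP}, where the argument is precisely your first step (explicit solution of the absorption problem along characteristics) followed by the Neumann-series/contraction argument that you relegate to a closing remark: one shows $\|K\|_{L^p\to L^p}\le 1-e^{-(\muaup+\musup)\diam\R}<1$ by computing $K1$ pointwise in $L^\infty$ (leakage through the boundary of the bounded domain makes the kernel strictly substochastic), passing to $L^1$ by duality, and interpolating. That route works for all $1\le p\le\infty$ and delivers the constant $C=C(\diam\R,p,\muaup,\musup)$ exactly as stated. Your Step 1 and its $L^p$ bound via Fubini along characteristics are correct and match the cited proof.

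Your primary route, however, has two genuine gaps. First, the compactness of $K^2$ rests on Lemma~\ref{lem:scatteringcompactness}, which is stated (and true) only for $1<p<\infty$; the theorem claims $1\le p\le\infty$, and the Fredholm machinery is simply unavailable at the endpoints (your duality/density remark addresses uniqueness, not the Fredholm property itself, and $L^1(\D)\not\subset L^2(\D)$ so one cannot reduce existence in $L^1$ to the reflexive case). Second, and more seriously, the dissipativity argument for $\ker(I-K)=\{0\}$ does not close in the conservative case $\mu\equiv 0$, which is admitted by (A2). Testing $\A\phi+(\mu+\sigma)\phi=\sigma\Theta\phi$, $\phi|_{\dD_-}=0$, against $|\phi|^{p-2}\phi$ and using Jensen/H\"older on $\S$ gives $(\sigma\Theta\phi,|\phi|^{p-2}\phi)_{\D}\le(\sigma\phi,|\phi|^{p-2}\phi)_{\D}$ with constant exactly $1$, so the scattering contributions cancel and you are left with $\tfrac1p\|\phi\|^p_{L^p(\dD_+;|\sn|)}+\int_{\D}\mu|\phi|^p\le 0$. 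When $\mu\equiv 0$ this yields only vanishing outflow, not $\phi=0$; recovering $\phi=0$ then requires analyzing the equality case in Jensen's inequality ($\phi(r,\cdot)$ constant in $s$ where $\sigma>0$) together with a propagation argument, which is a nontrivial piece of work you have not supplied. Both gaps disappear if you promote your ``alternative'' to the main argument: the strict contraction of $K$ on a bounded domain simultaneously gives existence, uniqueness, the endpoint cases, and the announced dependence of $C$.
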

For a proof of this and further results, let us refer to \cite{DL93vol6,EggSch13LP} and the references given there.

\section{Properties of the parameter-to-solution map} \label{sec:properties}

In this section, we investigate the mapping properties of the parameter-to-solution map
\begin{align}
S : D(S) \subset L^p(\R) \times L^p(\R) \to \V^p, \qquad (\mu,\sigma) \mapsto \phi,
\end{align}
where $\phi$ is the solution of \eqref{eq:op_eq}--\eqref{eq:op_bc} for given data $f$ and $g$. 
The domain of $S$ is defined by
\begin{align*}
 D(S):=\{ (\mu,\sigma)\in L^p(\R)\times L^p(\R): \text{(A2)--(A3) hold} \}.
\end{align*}
Note that the operator $S$ also depends on the choice of $p$ and on the data $f$ and $g$. 
For ease of presentation, we will emphasize this dependence only if necessary. 

\subsection{Continuity}\label{sec:weakClosedness}

Let us start with presenting some results about the continuity of $S$ with respect to the strong and weak topologies. 
The latter case will play a fundamental role in the analysis of the optimal control problem later on.

\begin{theorem}[Continuity]\label{thm:continuity}
 Let $1<p,q<\infty$ and assume that $f \in L^q(\D)$ and $g\in L^q(\Gamma_-;|s\cdot n|)$.
Then $S$ is continuous as mapping from $L^p(\R) \times L^p(\R)$ to $\V^q$.
\end{theorem}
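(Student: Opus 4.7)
My plan is to derive a linear error equation for the difference $w_n := \phi_n - \phi$, where $\phi_n = S(\mu_n, \sigma_n)$ and $\phi = S(\mu, \sigma)$, and then combine the stability estimate from Theorem~\ref{thm:existence} with a dominated convergence argument to control the source term.

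Let $(\mu_n, \sigma_n) \to (\mu, \sigma)$ in $L^p(\R) \times L^p(\R)$ with all pairs in $D(S)$. Subtracting the equations defining $\phi_n$ and $\phi$ gives
\begin{equation*}
\A w_n + \C(\mu_n, \sigma_n)\, w_n = h_n \quad \text{in } \D, \qquad w_n = 0 \quad \text{on } \dD_-,
\end{equation*}
with right-hand side $h_n := -(\mu_n - \mu)\phi - (\sigma_n - \sigma)(I - \Theta)\phi$. Since $(\mu_n, \sigma_n) \in D(S)$, the perturbed operator $\C(\mu_n, \sigma_n)$ satisfies (A2)--(A3) with the same bounds $\overline{\mu}, \overline{\sigma}$ uniformly in $n$. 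Hence Theorem~\ref{thm:existence}, applied to the error equation, yields
\begin{equation*}
\|w_n\|_{\V^q} \leq C \|h_n\|_{L^q(\D)}
\end{equation*}
with a constant $C$ independent of $n$. The task thereby reduces to proving $\|h_n\|_{L^q(\D)} \to 0$.

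The naive approach would use H\"older's inequality to pair $\mu_n - \mu \in L^p(\R)$ with $\phi$, but this demands $\phi \in L^{pq/(p-q)}(\D)$, which is not available from $\phi \in \V^q$ and in fact fails to make sense when $p \le q$. I expect this integrability mismatch between the parameter space and the solution space to be the principal obstacle.

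I would circumvent it by invoking the uniform $L^\infty$ bounds built into the domain $D(S)$: pointwise $|\mu_n - \mu| \leq 2\overline{\mu}$ and $|\sigma_n - \sigma| \leq 2\overline{\sigma}$, so
\begin{equation*}
|h_n| \leq 2\overline{\mu}\,|\phi| + 2\overline{\sigma}\,|(I-\Theta)\phi| =: H,
\end{equation*}
where $H \in L^q(\D)$ since $\phi \in \V^q \subset L^q(\D)$ and $\Theta$ is bounded on $L^p(\D)$ (hence also on $L^q(\D)$). Convergence of $(\mu_n, \sigma_n)$ in $L^p(\R) \times L^p(\R)$ provides a subsequence along which $\mu_{n_k} \to \mu$ and $\sigma_{n_k} \to \sigma$ almost everywhere on $\R$, and consequently $h_{n_k} \to 0$ almost everywhere on $\D$. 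Dominated convergence then yields $\|h_{n_k}\|_{L^q(\D)} \to 0$. A standard subsequence-of-subsequence argument (applied to any hypothetical sequence with $\|w_n\|_{\V^q} \geq \eps > 0$) upgrades this to convergence of the full sequence, and combining with the stability bound above gives $\phi_n \to \phi$ in $\V^q$, as desired.
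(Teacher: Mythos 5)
Your proposal is correct and follows essentially the same route as the paper: subtract the two transport equations to get a linear problem for the difference with homogeneous inflow data, apply the uniform a-priori estimate of Theorem~\ref{thm:existence}, and show the right-hand side tends to zero in $L^q(\D)$ by passing to an a.e.\ convergent subsequence and using dominated convergence with the uniform bounds from $D(S)$ as the dominating function. Your version is in fact slightly more careful on two minor points the paper glosses over: you write the correct right-hand side $(\mu-\mu_n)\phi+(\sigma-\sigma_n)(I-\Theta)\phi$ (the paper's displayed source term has $\Theta\phi$ in place of $(I-\Theta)\phi$), and you make the subsequence-of-subsequences step explicit to recover convergence of the full sequence.
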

\begin{proof}
Let $(\mu,\sigma)\in D(S)$ and $\{(\mu^n,\sigma^n)\} \subset D(S)$ such that $(\mu^n,\sigma^n)\to (\mu,\sigma)\in L^p(\R)\times L^p(\R)$. Furthermore, denote by $\phi$ and $\phi^n$ the solutions of \eqref{eq:op_eq}--\eqref{eq:op_bc} with parameters $(\mu^n,\sigma^n)$ and $(\mu,\sigma)$, respectively. Then
\begin{align*}
  \big(\A + \C(\mu^n,\sigma^n)\big) (\phi^n-\phi) = (\mu-\mu^n)\phi + (\sigma-\sigma^n)\Theta\phi.
\end{align*}
Since $\mu^n\to \mu$ in $L^p(\R)$, we can choose a subsequence, again denoted by $\{\mu^n\}$, such that $\mu^n\to \mu$ a.e.\@ in $\R$ and consequently $\mu_n\phi \to \mu \phi$ a.e.\@ in $\D$. Since $|\mu_n \phi|\leq C |\phi|$ is uniformly bounded, Lebesgue's dominated convergence theorem ensures $(\mu-\mu^n)\phi \to 0$ in $L^q(\D)$. Similarly, $(\sigma-\sigma^n)\Theta\phi\to 0$ in $L^q(\R\times\S)$. The uniform a-priori estimate of Theorem~\ref{thm:existence} then yields $\phi^n\to\phi$ in $\V^q$.
\end{proof}

We will show next, that the parameter-to-solution map is also continuous in the weak topology. 
This directly implies the weak-lower semi-continuity of the Tikhonov functional and thus 
yields the well-posedness of the regularization method.
The proof of the result heavily relies on the compactness provided by the averaging lemma.

\begin{theorem}[Weak continuity]\label{thm:weak_continuity}
Let $1<p,q<\infty$ and assume that $f \in L^q(\D)$ and $g\in L^q(\Gamma_-;|s\cdot n|)$. 
Then $S$ is weakly continuous, i.e., if $ D(S) \ni (\sigma^n,\mu^n)\rightharpoonup (\sigma,\mu)$ in $L^p(\R)\times L^p(\R)$, then 
$(\sigma,\mu)\in D(F)$ and 
$S(\sigma^n,\mu^n) \rightharpoonup S(\sigma,\mu)$ in $\V^{q}$.
\end{theorem}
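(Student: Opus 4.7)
The plan is to extract a weakly convergent subsequence of $\phi^n := S(\mu^n,\sigma^n)$, pass to the weak limit in the transport equation using the averaging lemma to handle the bilinear products involving the parameters, and identify the limit via the uniqueness statement of Theorem~\ref{thm:existence}.

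As a first step I would verify $(\mu,\sigma) \in D(S)$: the admissible set defined by (A2)--(A3) is convex and $L^p$-closed, hence weakly closed. The a priori estimate of Theorem~\ref{thm:existence} is uniform in $n$, since the constant depends only on $\overline{\mu}$, $\overline{\sigma}$, $q$ and $\diam\R$. Reflexivity of $\V^q$ for $1 < q < \infty$ then yields a subsequence with $\phi^n \rightharpoonup \tilde\phi$ in $\V^q$; because the inflow trace appears in the norm of $\V^q$, weak convergence automatically gives $\tilde\phi|_{\dD_-} = g$. A routine identification argument, exploiting the uniform $L^\infty$ bounds in (A2)--(A3), upgrades the weak $L^p$-convergence of the parameters to $\mu^n \overset{*}{\rightharpoonup} \mu$ and $\sigma^n \overset{*}{\rightharpoonup} \sigma$ in $L^\infty(\R)$, which is the stronger form needed below.

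The crux is to pass to the limit in $\A\phi^n + \mu^n \phi^n + \sigma^n \phi^n - \sigma^n \Theta \phi^n = f$ when tested against $\psi \in C_c^\infty(\D)$. The linear term $\A\phi^n$ converges weakly in $L^q(\D)$ by boundedness of $\A$, but the remaining terms are products of weakly convergent sequences, and the main obstacle is that the embedding $\V^q \hookrightarrow L^q(\D)$ is \emph{not} compact. The averaging lemma (Lemma~\ref{lem:scatteringcompactness}) supplies exactly the compactness needed: for $\psi \in C_c^\infty(\D)$, the product $\phi^n \psi$ lies in $\V_0^q$ (since $\psi$ vanishes near $\dD$) and is uniformly bounded there, so $\Theta(\phi^n \psi)$ is precompact in $L^q(\R)$; weak continuity of $\phi \mapsto \Theta(\phi\psi)$ from $L^q(\D)$ to $L^q(\R)$ identifies the weak limit as $\Theta(\tilde\phi \psi)$, and precompactness upgrades this to strong convergence. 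Combined with $\mu^n \overset{*}{\rightharpoonup} \mu$ in $L^\infty(\R)$, this yields
\[
\int_\D \mu^n \phi^n \psi \, dr\, ds = \int_\R \mu^n \,\Theta(\phi^n \psi)\, dr \;\longrightarrow\; \int_\R \mu \,\Theta(\tilde\phi \psi)\, dr = \int_\D \mu \tilde\phi \psi \, dr\, ds.
\]
The term $\sigma^n \phi^n$ is treated identically. For $\sigma^n \Theta \phi^n$ I would first split $\phi^n$ into a fixed lift of the inflow data $g$ (furnished by Theorem~\ref{thm:existence}) plus a remainder in $\V_0^q$, then apply the averaging lemma directly to obtain $\Theta \phi^n \to \Theta \tilde\phi$ strongly in $L^q(\R)$, so weak $L^\infty$-convergence of $\sigma^n$ is enough.

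Collecting these limits shows that $\tilde\phi$ satisfies \eqref{eq:op_eq}--\eqref{eq:op_bc} with parameters $(\mu,\sigma)$ and data $(f,g)$. The uniqueness clause of Theorem~\ref{thm:existence} forces $\tilde\phi = S(\mu,\sigma)$, and since every weakly convergent subsequence of $\{\phi^n\}$ shares this same limit, the full sequence satisfies $S(\mu^n,\sigma^n) \rightharpoonup S(\mu,\sigma)$ in $\V^q$. The key difficulty, and the only nontrivial ingredient, is the strong convergence of the $r$-dependent averages $\Theta(\phi^n \psi)$, which is precisely what the averaging lemma provides.
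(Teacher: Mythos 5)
Your proof is correct, but it is organized differently from the paper's. The paper does not extract a weak limit and identify it: it writes the difference $\phi-\phi^n$ as the solution of the transport problem \eqref{eq:difference} with right-hand side $\tilde f^n=(\mu^n-\mu)\phi^n+(\sigma^n-\sigma)(\phi^n-\Theta\phi^n)$, shows $\tilde f^n\rightharpoonup 0$ in $L^q(\D)$ by testing against $\psi\in C^\infty_0(\D)$ and applying the averaging lemma to $\phi^n\psi$, and then concludes directly because $(\A+\C(\mu,\sigma))^{-1}$ is a fixed bounded linear (hence weakly continuous) operator by Theorem~\ref{thm:existence}. This avoids your subsequence extraction, the verification that the inflow trace survives the weak limit, and the appeal to uniqueness to identify $\tilde\phi$; your route instead avoids having to form the difference equation and is the more standard ``compactness plus identification'' argument. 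The essential analytic content is identical in both: the only genuine compactness available is Lemma~\ref{lem:scatteringcompactness} applied to the uniformly bounded sequence $\phi^n\psi$ in $\V_0^q$ (and to a lifted version of $\phi^n$ for the $\Theta\phi^n$ term), which turns the weak convergence of the angular averages into strong convergence and lets you pass to the limit in the bilinear terms. Your explicit upgrade of $\mu^n,\sigma^n$ to weak-$*$ convergence in $L^\infty(\R)$, using the uniform bounds of (A2)--(A3), is a worthwhile addition: the paper's pairing $I^n(\psi)=\int_\R(\mu-\mu^n)\Theta(\phi^n\psi)\dr$ tacitly needs either this or a duality-exponent check to conclude, so your version is, if anything, slightly more careful on that point.
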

\begin{proof}
  Since $D(S)$ is closed and convex, $D(S)$ is weakly closed and $(\mu,\sigma)\in D(S)$. 
  Now let $\phi_n,\phi \in \V^p$ denote the unique solutions of \eqref{eq:op_eq}--\eqref{eq:op_bc} with parameters $\mu^n$, $\sigma^n$ and $\mu$, $\sigma$, respectively. 
  Then the difference $\phi^n-\phi$ satisfies the transport problem 
  \begin{align} \label{eq:difference}
    (\A+\C(\mu,\sigma))(\phi-\phi^n) = \tilde f^n \quad \text{in } \R\times\S, \qquad \phi-\phi^n=0 \quad \text{on } \dD_-
  \end{align}
  with right-hand side defined by
  \begin{align*}
  \tilde f^n = (\mu^n-\mu) \phi^n + (\sigma^n - \sigma) (\phi^n - \Theta \phi^n).
  \end{align*}
  By Theorem~\ref{thm:existence}, the operator $\A+\C$ is continuously invertible.  
  It thus remains to prove that $\tilde f^n \rightharpoonup 0$.
  Multiplying the first term with $\psi\in C^\infty_0(\D)$ and integrating yields
  \begin{align*}
    \int_{\R\times\S} (\mu^m-\mu)\phi^n \psi\d(r,s) 
      &= \int_\R (\mu-\mu^n) \int_\S \phi^n(r,s)\psi(r,s)\ds\dr =: I^n(\psi).
  \end{align*}
  Now by Lemma~\ref{lem:scatteringcompactness}, we obtain $\int_\S \phi^n \psi \ds = \Theta(\phi^n \psi) \to \Theta(\phi \psi)$ strongly in $L^p(\D)$. From this we conclude that $I^n(\psi) \to 0$ and as a consequence $(\mu^n-\mu)\phi^n \rightharpoonup 0$.  The term involving $\sigma^n-\sigma$ can be treated in a similar way.
\end{proof}

For the following quantitative estimate, we require some slightly stronger assumptions on the  source terms. This kind of regularity seems to be necessary since due to its hyperbolic type the transport equation does not possess a regularizing effect.

\begin{theorem}\label{thm:lipschitz}
 Let $f \in L^\infty(\D)$ and $g\in L^\infty(\dD_-)$. 
Then for any $1\leq q\leq p\leq \infty$ the operator
$S$ is Lipschitz continuous as a mapping from $L^p(\R) \times L^p(\R)$ to $\V^q$.
\end{theorem}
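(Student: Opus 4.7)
The plan is to mimic the structure of the proof of Theorem~\ref{thm:weak_continuity}: write down the transport equation satisfied by the difference $\phi^2-\phi^1 := S(\mu^2,\sigma^2) - S(\mu^1,\sigma^1)$, apply the a priori estimate of Theorem~\ref{thm:existence} to turn $\V^q$-bounds on $\phi^2 - \phi^1$ into $L^q(\D)$-bounds on the right-hand side, and then bound that right-hand side linearly in $\|\mu^1 - \mu^2\|_{L^p(\R)}$ and $\|\sigma^1 - \sigma^2\|_{L^p(\R)}$.

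Concretely, for $(\mu^i, \sigma^i) \in D(S)$ with solutions $\phi^i = S(\mu^i,\sigma^i)$, one verifies that $\phi^2 - \phi^1 \in \V^q$ satisfies
\begin{align*}
(\A + \C(\mu^2,\sigma^2))(\phi^2 - \phi^1) = (\mu^1 - \mu^2)\phi^1 + (\sigma^1 - \sigma^2)(\phi^1 - \Theta \phi^1)
\end{align*}
with homogeneous inflow data. Theorem~\ref{thm:existence} then gives
\begin{align*}
\|\phi^2 - \phi^1\|_{\V^q} \le C \Big( \|(\mu^1-\mu^2)\phi^1\|_{L^q(\D)} + \|(\sigma^1-\sigma^2)(\phi^1 - \Theta\phi^1)\|_{L^q(\D)} \Big),
\end{align*}
with $C$ depending only on $\diam\R$, $q$, $\muaup$, and $\musup$.

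The key additional ingredient is that the hypothesis $f \in L^\infty(\D)$, $g \in L^\infty(\dD_-)$ together with Theorem~\ref{thm:existence} applied with $p=\infty$ yields $\phi^1 \in L^\infty(\D)$ with a bound $\|\phi^1\|_{L^\infty(\D)} \le M := C_\infty(\|f\|_{L^\infty(\D)} + \|g\|_{L^\infty(\dD_-)})$ depending only on the data and on $\diam\R$, $\muaup$, $\musup$; consequently $\|\phi^1 - \Theta\phi^1\|_{L^\infty(\D)} \le (1+|\S|)M$. Since $\mu^1 - \mu^2$ depends only on $r$, Fubini and a pointwise $L^\infty$ estimate give
\begin{align*}
\|(\mu^1-\mu^2)\phi^1\|_{L^q(\D)} \le |\S|^{1/q} M \, \|\mu^1-\mu^2\|_{L^q(\R)},
\end{align*}
and analogously for the $\sigma$-term. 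Finally, the boundedness of $\R$ together with $q \le p$ yields the continuous embedding $L^p(\R) \hookrightarrow L^q(\R)$, so $\|\mu^1-\mu^2\|_{L^q(\R)} \le |\R|^{1/q - 1/p}\|\mu^1-\mu^2\|_{L^p(\R)}$, and similarly for $\sigma$.

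Chaining these estimates produces the Lipschitz bound
\begin{align*}
\|S(\mu^2,\sigma^2) - S(\mu^1,\sigma^1)\|_{\V^q} \le L \big( \|\mu^1-\mu^2\|_{L^p(\R)} + \|\sigma^1-\sigma^2\|_{L^p(\R)} \big),
\end{align*}
with $L$ depending only on $\diam\R$, $p$, $q$, $\muaup$, $\musup$, and $\|f\|_{L^\infty(\D)} + \|g\|_{L^\infty(\dD_-)}$. The only real obstacle is ensuring the $L^\infty$-bound on $\phi^1$; this is precisely where the strengthened regularity on $f$ and $g$ is used, and is the reason why the remark preceding the statement points out that no regularization effect of the hyperbolic equation is available to trade integrability of the source for integrability of the solution. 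The cases $p=\infty$ and/or $q \in \{1,\infty\}$ are covered without change since the $L^\infty$-bound on $\phi^1$ makes the Hölder step work for the full range $1\le q\le p\le\infty$.
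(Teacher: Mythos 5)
Your proposal is correct and follows essentially the same route as the paper's proof: form the transport equation for the difference of solutions, apply the a priori estimate of Theorem~\ref{thm:existence}, and use the $L^\infty$-bound on the solution (available because $f\in L^\infty(\D)$, $g\in L^\infty(\dD_-)$) together with the embedding $L^p(\R)\hookrightarrow L^q(\R)$ on the bounded domain. You merely spell out the H\"older/Fubini step and the $L^p$-to-$L^q$ embedding that the paper leaves implicit.
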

\begin{proof}
 Let $(\mu,\sigma),(\tilde\mu,\tilde\sigma)\in D(S)$ and denote by $\phi,\tilde\phi\in\V^q$ the corresponding solutions of the transport problem \eqref{eq:op_eq}--\eqref{eq:op_bc}. 
 The difference $\tilde\phi-\phi$ then satisfies \eqref{eq:difference} with right-hand side $\tilde f=(\tilde \mu - \mu) \tilde \phi + (\tilde \sigma - \sigma) (\tilde \phi - \Theta \tilde \phi)$.
 Using Theorem~\ref{thm:existence} we obtain 
\begin{align*}
 \|\phi-\tilde\phi\|_{\V^q}
\leq C \big( \|\tilde\mu-\mu\|_{L^q(\R)} + \|\tilde\sigma-\sigma\|_{L^q(\R)} \big) \|\tilde \phi\|_{L^\infty(\D)}.
\end{align*}
Due to the regularity of the data $f$ and $g$, we have $\tilde \phi \in \V^\infty$, which completes the prove.
\end{proof}

\subsection{Differentiability}\label{sec:differentiability}

As a next step, we investigate differentiability of the parameter-to-solution map.
We call a parameter pair $(\hat\mu,\hat\sigma)\in L^p(\R)\times L^p(\R)$ an admissible variation for 
$(\mu,\sigma)\in D(S)$, if the perturbed parameters
$(\mu,\sigma)+t(\hat\mu,\hat\sigma)\in D(F)$ for  $|t|\ll 1$. 
\begin{theorem}\label{thm:gateaux}
Let $1\leq q\leq p\leq \infty$ and let $f \in L^\infty(\D)$ and $g\in L^\infty(\dD_-)$. For $(\mu,\sigma)\in D(S)$ and admissible variation $(\hat\mu,\hat\sigma) \in L^p(\R) \times L^p(\R)$, let 
$S'(\mu,\sigma)[\hat\mu,\hat\sigma] = w \in \V^q$ be defined as the unique solution of
\begin{align}\label{eq:sensitivity}
 \A w + \C w = \tilde f \quad \text{in } \D, 
\qquad  
   w =0 \quad \text{on } \dD_-
\end{align}
with $\tilde f = - \C(\hat\mu,\hat\sigma)\phi$ and $\phi\in\V^q$ solving \eqref{eq:op_eq}--\eqref{eq:op_bc} 
with parameters $(\mu,\sigma)$. Then, there holds 
\begin{align}\label{eq:est_der}
\|S'(\mu,\sigma)[\hat\mu,\hat\sigma]\|_{\V^q} \leq C \big(\|\hat\mu\|_{L^p(\D)} + \|\hat\sigma\|_{L^p(\D)} \big) \|g\|_{L^\infty(\dD_-)}.
\end{align}
\end{theorem}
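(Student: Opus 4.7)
The plan is to view the sensitivity equation as an instance of the forward transport problem with right-hand side $\tilde f = -\C(\hat\mu,\hat\sigma)\phi$ and homogeneous inflow data, and then apply the existence and stability result of Theorem~\ref{thm:existence}. The only nontrivial task is to establish that $\tilde f$ lies in $L^q(\D)$ and to bound its norm by $(\|\hat\mu\|_{L^p(\R)}+\|\hat\sigma\|_{L^p(\R)})$ times something involving the data $f,g$, so the crucial preliminary observation is that $\phi=S(\mu,\sigma)$ enjoys $L^\infty$-regularity.

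First I would invoke Theorem~\ref{thm:existence} with the exponent $p=\infty$: since by assumption $f\in L^\infty(\D)$ and $g\in L^\infty(\dD_-)$, the solution $\phi$ of \eqref{eq:op_eq}--\eqref{eq:op_bc} actually lies in $\V^\infty\hookrightarrow L^\infty(\D)$ with a bound $\|\phi\|_{L^\infty(\D)}\le C(\|f\|_{L^\infty(\D)}+\|g\|_{L^\infty(\dD_-)})$. The constant $C$ depends only on $\diam\R$ and the bounds $\overline\mu,\overline\sigma$ from (A2)--(A3). This $L^\infty$-regularity of $\phi$ is what compensates for the absence of any smoothing effect of the transport operator and is what makes the whole argument go through.

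Next I would examine $\tilde f=-\hat\mu\,\phi-\hat\sigma(\phi-\Theta\phi)$ term by term. Since $\Theta$ is bounded on $L^\infty(\D)$ as an average, we have $\phi-\Theta\phi\in L^\infty(\D)$ with norm controlled by $\|\phi\|_{L^\infty(\D)}$. Writing, for $q\le p$ and a bounded domain $\R$, the embedding $L^p(\R)\hookrightarrow L^q(\R)$, a straightforward H\"older estimate gives
\begin{align*}
\|\hat\mu\,\phi\|_{L^q(\D)}+\|\hat\sigma(\phi-\Theta\phi)\|_{L^q(\D)}
\le C\bigl(\|\hat\mu\|_{L^p(\R)}+\|\hat\sigma\|_{L^p(\R)}\bigr)\|\phi\|_{L^\infty(\D)},
\end{align*}
so that $\tilde f\in L^q(\D)$. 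Since the admissibility of $(\hat\mu,\hat\sigma)$ guarantees that $(\mu,\sigma)$ still satisfies (A2)--(A3), Theorem~\ref{thm:existence} applied to \eqref{eq:sensitivity} with right-hand side $\tilde f\in L^q(\D)$ and zero inflow data produces a unique $w\in\V^q$ with $\|w\|_{\V^q}\le C\|\tilde f\|_{L^q(\D)}$. Chaining these two inequalities and using the $L^\infty$-bound on $\phi$ from the first step yields the estimate \eqref{eq:est_der}.

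The main obstacle, if any, is really a conceptual one rather than a computational one: one must realize that, because the transport equation is hyperbolic and carries no smoothing, the product $\hat\mu\,\phi$ of an $L^p$-parameter with an $L^q$-solution cannot in general be controlled in $L^q$ unless $\phi$ is actually bounded. This is precisely why the hypothesis $f\in L^\infty(\D),\ g\in L^\infty(\dD_-)$ is imposed, and why one first upgrades $\phi$ to $\V^\infty$ before applying the a-priori estimate of Theorem~\ref{thm:existence}. Everything else is a routine combination of H\"older's inequality and the linear theory for the transport equation.
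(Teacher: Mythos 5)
Your argument correctly establishes the two easy ingredients, and it does so exactly as the paper does: the $L^\infty$-bound $\|\phi\|_{L^\infty(\D)}\le C(\|f\|_{L^\infty(\D)}+\|g\|_{L^\infty(\dD_-)})$ from Theorem~\ref{thm:existence} with $p=\infty$, the H\"older estimate $\|\C(\hat\mu,\hat\sigma)\phi\|_{L^q(\D)}\le C(\|\hat\mu\|_{L^p(\R)}+\|\hat\sigma\|_{L^p(\R)})\|\phi\|_{L^\infty(\D)}$ for $q\le p$, and then the a-priori bound for the sensitivity problem \eqref{eq:sensitivity}. This gives unique solvability of \eqref{eq:sensitivity} and the estimate \eqref{eq:est_der} (your version carries the extra factor $\|f\|_{L^\infty(\D)}$, which the paper's stated bound suppresses).

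However, there is a genuine gap: you never show that the $w$ you have constructed is in fact the G\^ateaux derivative of $S$, i.e.\ that the difference quotients $w_t:=\bigl(S((\mu,\sigma)+t(\hat\mu,\hat\sigma))-S(\mu,\sigma)\bigr)/t$ converge to $w$ in $\V^q$ as $t\to 0$. That convergence is the actual content of the theorem (it is what licenses the notation $S'(\mu,\sigma)$ and is what the subsequent results, e.g.\ Theorem~\ref{thm:der_lip}, build on), and it is where the paper's proof spends all of its effort: one writes the equation satisfied by $w_t-w$, namely $\A(w_t-w)+\C(w_t-w)=\C(\hat\mu,\hat\sigma)(\phi-\phi_t)$ with zero inflow data, applies Theorem~\ref{thm:existence} to get $\|w_t-w\|_{\V^q}\le C(\|\hat\mu\|_{L^p(\R)}+\|\hat\sigma\|_{L^p(\R)})\|\phi-\phi_t\|_{L^\infty(\D)}$, and then uses the continuity of $S$ in the $L^\infty$ setting (Theorem~\ref{thm:lipschitz} with the $L^\infty$ data) to conclude $\phi_t\to\phi$ in $L^\infty(\D)$ and hence $w_t\to w$. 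Note that the na\"ive cancellation in the difference quotient is exact here because $\C$ depends linearly on the parameters, which is why no remainder term appears; still, the passage to the limit must be carried out, and your proposal omits it entirely.
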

\begin{proof}
Let $\phi_t\in\V^q$ denote the solution of \eqref{eq:op_eq}--\eqref{eq:op_bc} for parameters $(\mu,\sigma)+t(\hat\mu,\hat\sigma)\in D(S)$ and $t$ sufficiently small and let $w_t := (\phi_t-\phi)/t$.
Then
\begin{alignat*}{3}
 \A (w_t-w) + \C (w_t-w) &= \C(\hat\mu,\hat\sigma)(\phi-\phi_t) &\quad&\text{in } \D,
\end{alignat*}
and $w_t-w = 0$ on $\dD_-$. By Theorem~\ref{thm:existence} we thus obtain
\begin{align*}
 \|w_t-w\|_{\V^q} \leq C \big(\|\hat\mu\|_{L^p(\D)} + \|\hat\sigma\|_{L^p(\D)} \big) \|\phi-\phi_t\|_{L^\infty(\D)}.
\end{align*}
The continuity of the parameter-to-solution map, and the integrability condition on the data, yields 
$\phi_t \to \phi$ in $L^\infty(\D)$ from which we conclude that $w_t\to w$ in $\V^q$ as $t\to 0$.
The estimate \eqref{eq:est_der} follows again from Theorem~\ref{thm:existence}. 
\end{proof}
One can see from \eqref{eq:sensitivity} that $S'$ depends linearly on the variation $(\hat\mu,\hat\sigma)$. 
By the continuous extension principle, the operator $S'(\mu,\sigma)$ can then be extended to a bounded linear 
operator $S'(\mu,\sigma):L^p(\R)\times L^p(\R)\to \V^q$, which we call the derivative of $S$ in the following.
\begin{theorem}\label{thm:der_lip}
 Let $2\leq p\leq \infty$ and $1 \le q \le p/2$ and assume that $f \in L^\infty(\D)$ and $g\in L^\infty(\Gamma_-)$. 
Then $S'$ is Lipschitz continuous, i.e., for $(\mu_1,\sigma_1)$,  $(\mu_2,\sigma_2)\in D(S)$ there holds
 \begin{align*}
 & \| S'(\mu_{1},\sigma_{1}) - S'(\mu_{2},\sigma_{2})\|_{\mathcal{L}(L^{p}(\R)\times L^{p}(\R);\V^{q})} \\
 & \qquad \qquad \qquad \leq L \big(\|\mu_{1}-\mu_{2}\|_{L^{p}(\R)}+ \|\sigma_{1}-\sigma_{2}\|_{L^{p}(\R)} \big) \|g\|_{L^\infty(\dD_-)}.
 \end{align*}
\end{theorem}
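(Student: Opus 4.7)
The plan is to derive a linear transport equation for the difference $w_1-w_2$, where $w_i := S'(\mu_i,\sigma_i)[\hat\mu,\hat\sigma]$, apply the a priori bound from Theorem~\ref{thm:existence}, and estimate the resulting right-hand side by H\"older's inequality together with the Lipschitz bounds already established in Theorems~\ref{thm:lipschitz} and~\ref{thm:gateaux}. Setting $\phi_i := S(\mu_i,\sigma_i)$, subtracting the sensitivity equation~\eqref{eq:sensitivity} for $i=1,2$ and rearranging so that $\A+\C(\mu_1,\sigma_1)$ acts on $w_1-w_2$, one finds
\begin{align*}
(\A+\C(\mu_1,\sigma_1))(w_1-w_2) = \C(\mu_2-\mu_1,\sigma_2-\sigma_1)\,w_2 \;-\; \C(\hat\mu,\hat\sigma)(\phi_1-\phi_2),
\end{align*}
together with vanishing trace on $\dD_-$.

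Next I would invoke Theorem~\ref{thm:existence} to bound $\|w_1-w_2\|_{\V^q}$ by a constant (depending only on $\overline\mu,\overline\sigma,\diam\R,q$) times the $L^q(\D)$-norm of the right-hand side. Each of the four contributions there has the form $\eta\,v$ with $\eta\in L^p(\R)$ and $v$ equal to one of $w_2,\Theta w_2,\phi_1-\phi_2$, or $\Theta(\phi_1-\phi_2)$. H\"older's inequality yields
\begin{align*}
\|\eta\,v\|_{L^q(\D)} \;\le\; \|\eta\|_{L^p(\R)}\,\|v\|_{L^{p^*}(\D)}, \qquad \tfrac{1}{q} = \tfrac{1}{p} + \tfrac{1}{p^*},
\end{align*}
and the hypothesis $q\le p/2$ is precisely what guarantees $p^*\le p$. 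Hence, by the continuous embedding $L^p(\D)\hookrightarrow L^{p^*}(\D)$ (valid because $\D$ is bounded), it suffices to control each $v$ in $L^p(\D)$. For $v=w_2$ or $v=\Theta w_2$, Theorem~\ref{thm:gateaux} applied with exponent $p$ on both sides -- combined with the $L^p$-boundedness of $\Theta$ -- gives $\|v\|_{L^p(\D)}\le C(\|\hat\mu\|_{L^p(\R)}+\|\hat\sigma\|_{L^p(\R)})\|g\|_{L^\infty(\dD_-)}$; for $v=\phi_1-\phi_2$ or $v=\Theta(\phi_1-\phi_2)$ the analogous bound with $\|\mu_1-\mu_2\|_{L^p(\R)}+\|\sigma_1-\sigma_2\|_{L^p(\R)}$ in place of $\|\hat\mu\|_{L^p(\R)}+\|\hat\sigma\|_{L^p(\R)}$ follows from Theorem~\ref{thm:lipschitz} with $q$ replaced by $p$.

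Summing the four contributions produces exactly the product $(\|\mu_1-\mu_2\|_{L^p}+\|\sigma_1-\sigma_2\|_{L^p})(\|\hat\mu\|_{L^p}+\|\hat\sigma\|_{L^p})\|g\|_{L^\infty(\dD_-)}$, and taking the supremum over variations $(\hat\mu,\hat\sigma)$ of unit $L^p(\R)\times L^p(\R)$-norm yields the asserted Lipschitz estimate for the operator norm. The only real subtlety is the balance of exponents in the H\"older step: each coefficient $\eta\in L^p$ ``consumes'' one factor of $p$, and the remaining factor $v$ must still sit within the $L^p$-scale in which the earlier a priori estimates live, which forces the restriction $q\le p/2$. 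Beyond this, no new ingredients are required; the proof is a routine combination of Theorems~\ref{thm:existence},~\ref{thm:lipschitz}, and~\ref{thm:gateaux}.
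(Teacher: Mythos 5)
Your argument is essentially identical to the paper's: the same equation for $w_1-w_2$ with right-hand side $-\C(\hat\mu,\hat\sigma)(\phi_1-\phi_2)-\C(\mu_1-\mu_2,\sigma_1-\sigma_2)w_2$, the same H\"older estimates, and the same invocation of Theorems~\ref{thm:existence}, \ref{thm:lipschitz}, and \ref{thm:gateaux}. The only difference is that you spell out explicitly where the hypothesis $q\le p/2$ enters (via the exponent $p^*$ and the embedding $L^p(\D)\hookrightarrow L^{p^*}(\D)$ on the bounded domain), which the paper leaves implicit; the proof is correct.
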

\begin{proof}
 Let $(\mu_{i},\sigma_{i})\in D(S)$, $i=1,2$, and let $w_{i}\in\V^q$, $i=1,2$, be the solutions of the sensitivity problems in Theorem~\ref{thm:gateaux} for some admissible direction $(\hat\mu,\hat\sigma)\in L^p(\R)\times L^p(\R)$.
 Then $w_1-w_2$ satisfies \eqref{eq:sensitivity} with $\tilde f=-\C(\hat\mu,\hat\sigma)(\phi_1-\phi_2)-\C(\mu_1-\mu_2,\sigma_1-\sigma_2)w_2$.
 Using H\"older's inequality the two parts of $\tilde f$ can be estimated individually by
 \begin{align}
 \| \C(\hat\mu,\hat\sigma)(\phi_1-\phi_2) \|_{L^{q}(\D)} &\leq C \big(\|\hat\mu\|_{L^{p}(\R)}+\|\hat\sigma\|_{L^{p}(\R)}\big) \|\phi_1-\phi_2\|_{L^{p}(\D)},\label{eq:sens1}\\
 \| \C(\mu_1-\mu_2,\sigma_1-\sigma_2)w_2\|_{L^{q}(\D)} &\leq C \big(\|\mu_1-\mu_2\|_{L^{p}(\R)}+\|\sigma_1-\sigma_2\|_{L^{p}(\R)}\big)\|w_2\|_{L^{p}(\D)}.\label{eq:sens2}
\end{align}
Using Theorem~\ref{thm:lipschitz} and Theorem~\ref{thm:gateaux} we then obtain via the triangle inequality 
\begin{align*}
  \|\tilde f\|_{L^q(\R\times\S)}\leq C \big(\|\hat\mu\|_{L^{p}(\R)}+\|\hat\sigma\|_{L^{p}(\R)}\big) \big(\|\mu_1-\mu_2\|_{L^{p}(\R)}+\|\sigma_1-\sigma_2\|_{L^{p}(\R)}\big).
\end{align*}
The Lipschitz estimate now follows from the a-priori estimates stated in Theorem~\ref{thm:existence}.
\end{proof}
%
Differentiability of $S$ has already been proven in \cite{DiDoNaPaSi02},
but under more restrictive assumptions and only for $p=\infty$, which turns out to be the simplest case. 
The proofs of \cite{DiDoNaPaSi02} cannot be applied to the more general setting considered here.
By carefully inspecting the estimates \eqref{eq:sens1}--\eqref{eq:sens2}, using 
assumptions (A2)--(A3), H\"older's inequality, and interpolation, we obtain 
\begin{corollary}\label{cor:der_hoelder}
 Let $1\leq q <\infty$ and $q<p\leq 2q$ and assume that $f \in L^\infty(\D)$ and $g\in L^\infty(\Gamma_-)$. Then $S'$ is H\"older continuous with H\"older exponent $\frac{p-q}{q}$.
\end{corollary}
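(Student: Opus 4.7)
The approach I would take is to mimic the proof of Theorem~\ref{thm:der_lip} but patch the two estimates \eqref{eq:sens1}--\eqref{eq:sens2} with an interpolation step. The obstruction is plain: H\"older's inequality with $1/q = 1/p + 1/r$ forces $r = pq/(p-q)$, and in the regime $q<p\le 2q$ one has $r>p$, so Theorems~\ref{thm:lipschitz} and \ref{thm:gateaux}, which only deliver $L^p$-control of $\phi_1-\phi_2$ and $w_2$, cannot be plugged in directly. The remedy is the elementary log-convexity bound
\begin{align*}
 \|h\|_{L^r(\D)} \le \|h\|_{L^p(\D)}^{p/r}\,\|h\|_{L^\infty(\D)}^{1-p/r}, \qquad p \le r < \infty,
\end{align*}
whose exponent $p/r = (p-q)/q$ lies in $(0,1]$ exactly under the hypothesis $q<p\le 2q$.

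Concretely, for \eqref{eq:sens1} I would estimate $\|\C(\hat\mu,\hat\sigma)(\phi_1-\phi_2)\|_{L^q}$ via H\"older with $r=pq/(p-q)$, interpolate $\|\phi_1-\phi_2\|_{L^r}$ between its $L^p$-norm (controlled by $\|\mu_1-\mu_2\|_{L^p}+\|\sigma_1-\sigma_2\|_{L^p}$ thanks to Theorem~\ref{thm:lipschitz}) and its $L^\infty$-norm (which is bounded uniformly by applying Theorem~\ref{thm:existence} with $p=\infty$, valid because $f\in L^\infty(\D)$ and $g\in L^\infty(\dD_-)$). This produces
\begin{align*}
 \|\C(\hat\mu,\hat\sigma)(\phi_1-\phi_2)\|_{L^q(\D)} \le C\bigl(\|\hat\mu\|_{L^p}+\|\hat\sigma\|_{L^p}\bigr)\bigl(\|\mu_1-\mu_2\|_{L^p}+\|\sigma_1-\sigma_2\|_{L^p}\bigr)^{(p-q)/q}.
\end{align*}

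For \eqref{eq:sens2} I would swap the roles: interpolate $\|\mu_1-\mu_2\|_{L^r}$ and $\|\sigma_1-\sigma_2\|_{L^r}$ between their $L^p$-norms and their $L^\infty$-norms, the latter bounded by $2\overline{\mu}$ and $2\overline{\sigma}$ thanks to assumptions (A2)--(A3). H\"older's inequality then pairs these factors with $\|w_2\|_{L^p}$, which is controlled linearly in $\|\hat\mu\|_{L^p}+\|\hat\sigma\|_{L^p}$ by Theorem~\ref{thm:gateaux}. The outcome is the same Hölder-type bound as above.

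Adding the two contributions yields a right-hand side $\tilde f$ with $\|\tilde f\|_{L^q}$ bounded by the product of $\|\hat\mu\|_{L^p}+\|\hat\sigma\|_{L^p}$ and $(\|\mu_1-\mu_2\|_{L^p}+\|\sigma_1-\sigma_2\|_{L^p})^{(p-q)/q}$. Applying Theorem~\ref{thm:existence} to the equation \eqref{eq:sensitivity} satisfied by $w_1-w_2$, and taking the supremum over $\|\hat\mu\|_{L^p}+\|\hat\sigma\|_{L^p}\le 1$, gives the claimed H\"older estimate on the operator norm. The only delicate point is verifying that the interpolation exponent $(p-q)/q$ is admissible, i.e.\ that $r\ge p$ and the resulting power lies in $(0,1]$; both are equivalent to $q<p\le 2q$, which is precisely the hypothesis of the corollary.
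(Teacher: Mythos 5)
Your argument is correct and is exactly the route the paper intends: the paper only sketches the proof of Corollary~\ref{cor:der_hoelder} by referring to ``the estimates \eqref{eq:sens1}--\eqref{eq:sens2}, assumptions (A2)--(A3), H\"older's inequality, and interpolation,'' and your write-up fills in precisely those steps, with the correct conjugate exponent $r=pq/(p-q)$ and interpolation exponent $p/r=(p-q)/q$, whose admissibility is indeed equivalent to $q<p\le 2q$.
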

This estimate will allow us to obtain convergence of iterative minimization algorithms under very general conditions.
With the same techniques as used to prove Theorem~\ref{thm:der_lip}, 
one can also analyze higher order derivatives. 
For later reference let us state a result about the existence of the Hessian.
\begin{theorem}\label{thm:hessian}
 Let $p=3q$ for some $1\leq q\leq \infty$ and assume that $f \in L^\infty(\D)$ and $g\in L^\infty(\Gamma_-)$. 
 Then $S : D(S) \subset L^p(\R) \times L^p(\R) \to \V^q$ is twice continuously differentiable and $S''$ is given by
 \begin{align*}
   S''(\mu,\sigma)[(\hat\mu_{1},\hat\sigma_{1}), (\hat\mu_{2},\hat\sigma_{2})] = H,
 \end{align*}
 where $H\in \V^q$ is the unique solution of
 \begin{alignat*}{3}
    \A H + \C H &= \C(\hat\mu_{1},\hat\sigma_{1})w(\hat\mu_{2},\hat\sigma_{2}) +\C(\hat\mu_{2},\hat\sigma_{2}) w(\hat\mu_{1},\hat\sigma_{1}) &\quad&\text{in } \D, &\\
     H &=0 &\quad&\text{on } \dD_-.
 \end{alignat*} 
  Moreover, $S''(\mu,\sigma)$ is Lipschitz continuous w.r.t. its arguments and
 \begin{align*}
  &\| S''(\mu_{1},\sigma_{1}) - S''(\mu_{2},\sigma_{2})\|_{\mathcal{L}(L^{p}(\R)\times L^{p}(\R), L^{p}(\R)\times L^{p}(\R); \V^{q})}\\ 
  &\qquad \qquad \leq C \big(\|\mu_{1}-\mu_{2}\|_{L^{p}(\R)}+ \|\sigma_{1}-\sigma_{2}\|_{L^{p}(\R)} \big),
 \end{align*}
  with  $C$ depending only on the domain, the bounds for the parameters, and the data.
\end{theorem}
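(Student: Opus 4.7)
The plan is to differentiate the first-order sensitivity equation
\[ \A w + \C(\mu,\sigma) w = -\C(\hat\mu,\hat\sigma)\phi \]
one more time with respect to the parameters, following the template of the proofs of Theorems~\ref{thm:gateaux} and~\ref{thm:der_lip} at one higher order. The condition $p = 3q$ is exactly what is needed to close the triple Hölder product $L^p\cdot L^p\cdot L^p \hookrightarrow L^q$ that shows up at the final step.

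First, I would check well-posedness of the PDE defining $H$ in $\V^q$. By Theorem~\ref{thm:gateaux} applied with target exponent $3q/2\le p$, each first-order sensitivity $w(\hat\mu_j,\hat\sigma_j)=S'(\mu,\sigma)[\hat\mu_j,\hat\sigma_j]$ lies in $\V^{3q/2}$. Since $1/q = 1/p + 1/(3q/2)$, Hölder places the right-hand side of the Hessian equation in $L^q(\D)$ with a norm bounded bilinearly by the two variations, and Theorem~\ref{thm:existence} yields a unique $H\in\V^q$.

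Next, to identify $H$ with the Gâteaux derivative of $(\mu,\sigma)\mapsto S'(\mu,\sigma)[\hat\mu_2,\hat\sigma_2]$ in direction $(\hat\mu_1,\hat\sigma_1)$, I would set $w_t=S'(\mu+t\hat\mu_1,\sigma+t\hat\sigma_1)[\hat\mu_2,\hat\sigma_2]$ and $w=S'(\mu,\sigma)[\hat\mu_2,\hat\sigma_2]$, subtract the two defining sensitivity equations, divide by $t$, and subtract the equation for $H$. A short manipulation shows that $(w_t-w)/t - H$ solves a transport problem with zero inflow and residual of the schematic form
\[ R_t = -\C(\hat\mu_2,\hat\sigma_2)\bigl[(\phi_t-\phi)/t - w_1\bigr] - \C(\hat\mu_1,\hat\sigma_1)(w_t - w), \]
where $\phi_t=S(\mu+t\hat\mu_1,\sigma+t\hat\sigma_1)$ and $w_1=S'(\mu,\sigma)[\hat\mu_1,\hat\sigma_1]$. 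The bracket in the first term tends to $0$ in $\V^{3q/2}$ by Gâteaux differentiability of $S$ into $\V^{3q/2}$ (Theorem~\ref{thm:gateaux}); the factor $w_t-w$ in the second term tends to $0$ in $\V^{3q/2}$ by the Lipschitz continuity of $S'$ into $\V^{3q/2}$ (Theorem~\ref{thm:der_lip}). Hölder with $p=3q$ converts both to convergence in $L^q(\D)$, and the a priori bound of Theorem~\ref{thm:existence} gives $(w_t-w)/t\to H$ in $\V^q$.

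For the Lipschitz estimate on $S''$, I would compare $H_i=S''(\mu_i,\sigma_i)[(\hat\mu_1,\hat\sigma_1),(\hat\mu_2,\hat\sigma_2)]$ for $i=1,2$. The difference $H_1-H_2$ satisfies an inhomogeneous transport problem with coefficients $(\mu_1,\sigma_1)$, homogeneous inflow, and right-hand side built from one term of the form $\C(\mu_1-\mu_2,\sigma_1-\sigma_2)H_2$ together with two cross-terms $\C(\hat\mu_j,\hat\sigma_j)\bigl[(S'(\mu_1,\sigma_1)-S'(\mu_2,\sigma_2))[\hat\mu_k,\hat\sigma_k]\bigr]$. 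Hölder with $1/q=3/p$ controls each product, using the bilinear bound for $H_2$ from the first step for the first term, and Theorem~\ref{thm:der_lip} with intermediate exponent $3q/2$ for the two cross-terms. Theorem~\ref{thm:existence} then delivers the claimed Lipschitz bound, which in turn upgrades the Gâteaux derivative of Step two to a Fréchet derivative. The main obstacle throughout is purely combinatorial, namely the bookkeeping of Hölder exponents over several triple products; the exact value $p=3q$ is dictated by this bookkeeping.
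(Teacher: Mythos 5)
Your proposal is correct and follows essentially the same route as the paper, whose ``proof'' is only a two-line sketch (define the Hessian for admissible variations, extend to a bounded bilinear map, and estimate exactly as in the Lipschitz proof for $S'$); you have simply carried out that program, and your H\"older bookkeeping with the intermediate exponent $3q/2$ is exactly what closes the argument (for the term $\C(\mu_1-\mu_2,\sigma_1-\sigma_2)H_2$ you should use that $w_j\in\V^{3q}$, hence $H_2\in\V^{3q/2}$, rather than only $H_2\in\V^q$). As a side remark, your difference-quotient computation correctly yields the right-hand side $-\C(\hat\mu_1,\hat\sigma_1)w(\hat\mu_2,\hat\sigma_2)-\C(\hat\mu_2,\hat\sigma_2)w(\hat\mu_1,\hat\sigma_1)$, so the plus sign in the equation for $H$ as stated in the theorem appears to be a sign typo in the paper.
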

Like above, the Hessian should first be defined for admissible parameter variations and then be extended to a bounded bilinear map. The estimate then follows in the same way as the Lipschitz estimate for the first derivative. We will utilize the properties of the Hessian to show local convexity of the regularized functional \eqref{eq:tikhonov} in a Hilbert space setting.
\section{The optimal control problem} \label{sec:OCproblem}

Let us recall the definition of the optimal control problem 
\begin{align*}
\| BS(\mu,\sigma)- \md\|_{L^q(\dR)}^q  + \alpha \|\mu-\mu_0\|^p_{L^p(\R)} + \alpha \|\sigma-\sigma_0\|^p_{L^p(\R)} 
  \to \min_{(\mu,\sigma) \in D(S)},
\end{align*}
defined by minimizing the Tikhonov functional for some $\alpha \ge 0$.
Based on the results about the mapping properties of the parameter to solution map $S$ and the observation operator $B$, 
we will now comment on the existence and stability of minimizers. The arguments are rather standard, and we only sketch the main points. Let us refer to \cite{EHN96,EnKuNeu89} for details and proofs.


\subsection{Existence of Minimizers}
By weak continuity of $S$ and weak lower semi-continuity of norms, the Tikhonov functional is weakly lower semi-continuous and bounded from below. Due to the box constraints and the reflexivity of $L^p$, $1<p<\infty$, the domain $D(S)$ is weakly compact. This yields the existence of a minimizer $(\mu_\alpha,\sigma_\alpha)$ for any $\alpha\geq 0$.
  
  
\subsection{Stability of Minimizers}
The minimizers  are stable w.r.t. perturbations in the following sense: For $\alpha_n\to\alpha \geq 0$ and $\md^{n}\to \md$ there exists a sequence of minimizers $(\mu_{\alpha_n},\sigma_{\alpha_n})$ converging weakly to a minimizer $(\mu_\alpha,\sigma_\alpha)$. This follows from the weak compactness of $D(S)$ and weak continuity of $S$. 
If $\alpha>0$, then we can obtain strong convergence. 


\subsection{Convergence of Minimizers}
From the stability result, we already deduce that subsequences of minimizers $(\mu_{\alpha_n},\sigma_{\alpha_n})$ converge weakly towards a minimizer of the $L^p$-norm residual of equation \eqref{eq:ip_informal} if $\alpha_n \to 0$.
If the inverse problem is solvable and if $\alpha_n\to 0$ and $\|\md^n-\md\|_{L^p}^p/\alpha_n \to 0$, 
then convergence is strong and the limit is a solution of \eqref{eq:ip_informal}.


\subsection{Remarks and generalizations}
Note that, in general, uniqueness of solutions for the inverse problem \eqref{eq:ip_informal} 
or of minimizers for the optimal control problem \eqref{eq:tikhonov}  
cannot be expected. We will discuss this issue in more detail in the next section. 
Also note that, with the same arguments as above, we can analyze minimization problems of the form
  \begin{align*}
      \| BS(\mu,\sigma)- \md\|_{L^q(\dR)}^q  + \alpha R(\mu,\sigma)
  \to \min,
  \end{align*}
  where $R$ is some more general regularization functional. 
  One particular choice $R(\mu,\sigma) = \|\mu - \mu_0\|^2_{H^1(\D)} + \|\sigma - \sigma_0\|^2_{H^1(\D)}$
  will be considered in more detail in the next section. 
  Total variation regularization $R(\mu,\sigma) = |\mu|_{TV} + |\sigma|_{TV}$ is frequently used in image reconstruction; for an analysis see for instance \cite{AcarVogel}.
  Due to the continuous embedding of $H^1$ and $BV$ in certain $L^p$ spaces, the statements about existence, stability, and convergence of minimizers made above also hold true for these choices. Our results thus generalize those of \cite{TangHanHan2013}.
  Note however, that in dimension $d=3$, we cannot obtain Lipschitz- or H\"older continuity of the derivative $S'$ for $TV$-regularization, while for $H^1$ we even obtain Lipschitz continuous second derivatives. This is our guideline for the setting of the next section.

\section{Iterative minimization algorithms}\label{sec:iterative}

To ensure convergence of minimization algorithms, one has to impose some more restrictive conditions. 
In order to motivate the crucial assumptions, let us recall a basic convergence rate result from 
nonlinear regularization theory \cite{EHN96,EnKuNeu89}.
To simplify the presentation, we restrict ourselves to a Hilbert space setting and consider the Tikhonov functional
\begin{align} \label{eq:tikhonov2}
 \| BS(\mu,\sigma)- \md\|_{L^2(\dR)}^2  + \alpha \| \mu-\mu_0\|_{H^1(\R)}^2 + \alpha \| \sigma-\sigma_0\|_{H^1(\R)}^2.
\end{align}
Note that due to the continuous embedding of $H^1$ into $L^6$ in dimension $d \le 3$, we can use all properties of $S$ 
derived in Section~\ref{sec:properties} for $q=2$ and $p \le 6$. In particular, we infer from Theorem~\ref{thm:der_lip} and Theorem~\ref{thm:hessian} that $S$ has Lipschitz-continuous first and second derivatives.

\subsection{Convergence Rates for Minimizers}
It is well-known that quantitative estimates for convergence can only be obtained under some kind of source condition. 
We therefore assume in the following that there exists some $w\in \V^2$ such that 
\begin{align}\label{eq:source}
    (\mu^\dagger,\sigma^\dagger)-(\mu_0,\sigma_0) = S'(\mu^\dagger,\sigma^\dagger)^* w,\qquad L\|w\|_{\V^2}<1,
\end{align}
where $(\mu^\dagger,\sigma^\dagger)$ solves \eqref{eq:ip_informal} and $L$ is the Lipschitz constant of $S'$; see Theorem~\ref{thm:der_lip}.
From the abstract theory of nonlinear Tikhonov regularization \cite{EHN96,EnKuNeu89}, we deduce that
  \begin{align*}
    \|(\mu_\alpha,\sigma_\alpha)-(\mu^\dagger,\sigma^\dagger)\|_{H^1(\R)\times H^1(\R)} = \O(\sqrt{\alpha})\quad\text{and}\quad \|BS(\mu_\alpha,\sigma_\alpha)-\md\|_{L^2(\dR)} = \O(\alpha),
  \end{align*}
where $(\mu_\alpha,\sigma_\alpha)$ are corresponding minimizers of the Tikhonov functional with $\alpha>0$.  
Note that the best possible rate one could expect for the error in the parameters is $o(1)$, and for the residual is $\mathcal{O}(\sqrt{\alpha})$, if \eqref{eq:source} is not fulfilled.

\subsection{An iterative algorithm for computing a minimizer}\label{sec:PGN}
For minimizing the Tikhonov functional \eqref{eq:tikhonov2}, we consider a projected Gau\ss-Newton (PGN) method. To ease the notation, we use $x=(\mu,\sigma)$ and $F(x)=B S(\mu,\sigma)$. The method then reads 
\begin{align*}
\hat x_{n+1} &= x_n + \big( F'(x_n)^* F'(x_n) + \alpha_k I)^{-1} \big[F'(x_n)^* (\md - F(x_n)) + \alpha_k (x_0 - x_n) \big] \\
x_{n+1} &= P_{D(S)}(\hat x_{n+1}). 
 \end{align*}
Here $P_{D(S)}$ denotes the metric projection onto $D(S)$ with respect to the $H^1$-norm
and $F'(x)^* = S'(\mu,\sigma)^* B^*$ is the Hilbert space adjoint of the linearized parameter-to-measurement operator. As usual, $F'(x)^* w$ can be computed via the solution of an adjoint problem similar to \eqref{eq:sensitivity}.
A detailed analysis of the PGN iteration in the framework of iterative regularization methods can be found \cite{BakKok04,KalNeu06}.
Here, we consider this algorithm for the approximation of minimizers $x_\alpha = (\mu_\alpha,\sigma_\alpha)$ of the Tikhonov functional \eqref{eq:tikhonov2}. 
To promote global convergence, we choose a geometrically decaying sequence $\alpha_n=\max\{\frac{\alpha_0}{2^n},\alpha\}$ of regularization parameters. 
If the source condition \eqref{eq:source} holds with $\|w\|$ sufficiently small, 
then
 \begin{align*}
    \|x_n-x^\dag\|_{H^1(\R)\times H^1(\R)} \le C \sqrt{\alpha_n} \|w\|_{\V^2}
\quad \text{and} \quad 
   \|F(x_n) - \md\|_{L^2(\dR)} \le C \alpha_n \|w\|_{\V^2}
 \end{align*}
with a constant $C$ not depending on $\alpha$ or $w$.
For $\alpha = 0$, we recover the usual convergence rate statement of the iterative regularization method without data noise \cite[Chapter~4]{BakKok04}. For $\alpha>0$, the iteration is bounded but convergence is not so clear.

\subsection{Local convexity and convergence to minimizers}
We will now explain that for $\alpha>0$ and under the source condition \eqref{eq:source}, the PGN iteration converges to a local minimizer $x_\alpha$ of the Tikhonov functional.
Consider the Hessian of the Tikhonov functional given by 
\begin{align*}
H(x) = F''(x)^* (F(x) - \md) + F'(x)^* F'(x) + \alpha I.   
\end{align*}
One can easily see that, if $F$ is two-times differentiable and the norm of the residual $F(x)-\md$ is sufficiently small, such that $\|F''(x)^* (F(x)-\md)\| < \alpha$, then the Hessian is positive definite. 
Now, by the Lipschitz estimate for the first derivative we deduce that $\|F''(x)\| \le L \|B\|$, 
and from the convergence rate estimates for nonlinear Tikhonov regularization we have
 $\|F(x_\alpha)-\md\| \le C \alpha \|w\|$. 
Hence we conclude that, if the source condition \eqref{eq:source} is valid and $\|w\|$ is sufficiently small, then the Tikhonov functional is locally convex in a neighborhood of the minimizers $x_\alpha$.
From the estimates for $\|x_\alpha - x^\dag\|$ and $\|x_n-x^\dag\|$ and by the Lipschitz estimate for the first derivative, one can actually conclude that the region of convexity is always reached after a finite number of iterations. For a detailed analysis using similar arguments see \cite{Ram03}. 
In the area of convexity, the linear convergence follows with standard arguments.

\subsection{Remarks and Extensions}
Using the abstract theory of regularization methods in Banach spaces \cite{HoKaPoSch07}, 
the statements of the section can in principle be extended to the $L^p$-$L^q$ setting considered earlier;
see also \cite{Resmerita2005,ScherzerVariationalMethodsInImaging2009,SchusterKaltenbacherHofmannKazimierski2012}. The required convergence rates results for the GN method in Banach spaces have been established in \cite{KaltenbacherHofmann2010,KalSchSch09}.
At the end of our discussion, let us mention that also projected gradient methods in combination with appropriate rules for the choice for the stepsize can be used for minimizing the Tikhonov functional. For these methods, convergence to stationary points can be established even without a source condition and merely under H\"older continuity of the derivative \cite{HiPiUlUl09}. The same holds true for the PGN method \cite{EggSch11}.

\section{Computational Experiments} \label{sec:numerics}

To illustrate the theoretical results of the previous sections, we will present some numerical experiments in the following.

\subsection{Discretization}
For discretization of the radiative transfer problem \eqref{eq:rte}--\eqref{eq:rte_bc} we employ the
$P_N$-FEM method. This is a Galerkin approximation using a truncated spherical harmonics expansion with respect to the direction $s$ and a mixed finite element approximation for the corresponding spatially dependent Fourier coefficients. 
Due to the variational character of the method, one can systematically obtain consistent discretizations of the operator parameter-to-solution operator $S$, its derivative $S'$, and the adjoint $(S')^*$. Let us refer to \cite{EggSch10:3,LewisMiller84,WrightArridgeSchweiger07} for an analysis of the method and details on the implementation.

\subsection{Test example and choice of parameters}
We consider the setup depicted in Figure~\ref{fig:setup}: The computational domain $\R$ is a two-dimensional circle with radius $25$ mm. The absorption parameter $\mu$ is in the range of $0.005$ mm$^{-1}$ to $0.04$ mm$^{-1}$. The scattering $\sigma$ ranges from $5$ mm$^{-1}$ to $30$ mm$^{-1}$. This order of magnitude is typical for applications in optical tomography \cite{Arridge99}. The data $\md\in \RR^{16\times 16}$ are generated by sequentially illuminating the object by one of the sources $g_j$ and recording the outgoing light on the $i$th detector for prescribed parameters $\mu$ and $\sigma$, i.e.
\begin{align}\label{eq:measurement}
  \md_{ij} = \int_{\Sigma_i} B \phi_j(r) \d r.
\end{align}
Here $\phi_j$ is the photon density generated by the $j$th source and $\Sigma_i\subset\dR$ models the area of the $i$th detector; see Figure~\ref{fig:setup} for the arrangements of sources and detectors.
For our numerical experiments, we choose a sequence of regularization parameters $\alpha_n = \max\{\frac{\alpha_0}{2^n},\alpha_{\min}\}$ with $\alpha_0=\frac{1}{100}$ and $\alpha_{\min}=10^{-10}$. 
As initial guess, we use the constant functions $\mu_0=0.015$ mm$^{-1}$ and $\sigma_0=15$ mm$^{-1}$.

\begin{figure}[ht]
  \includegraphics[width=0.33\textwidth]{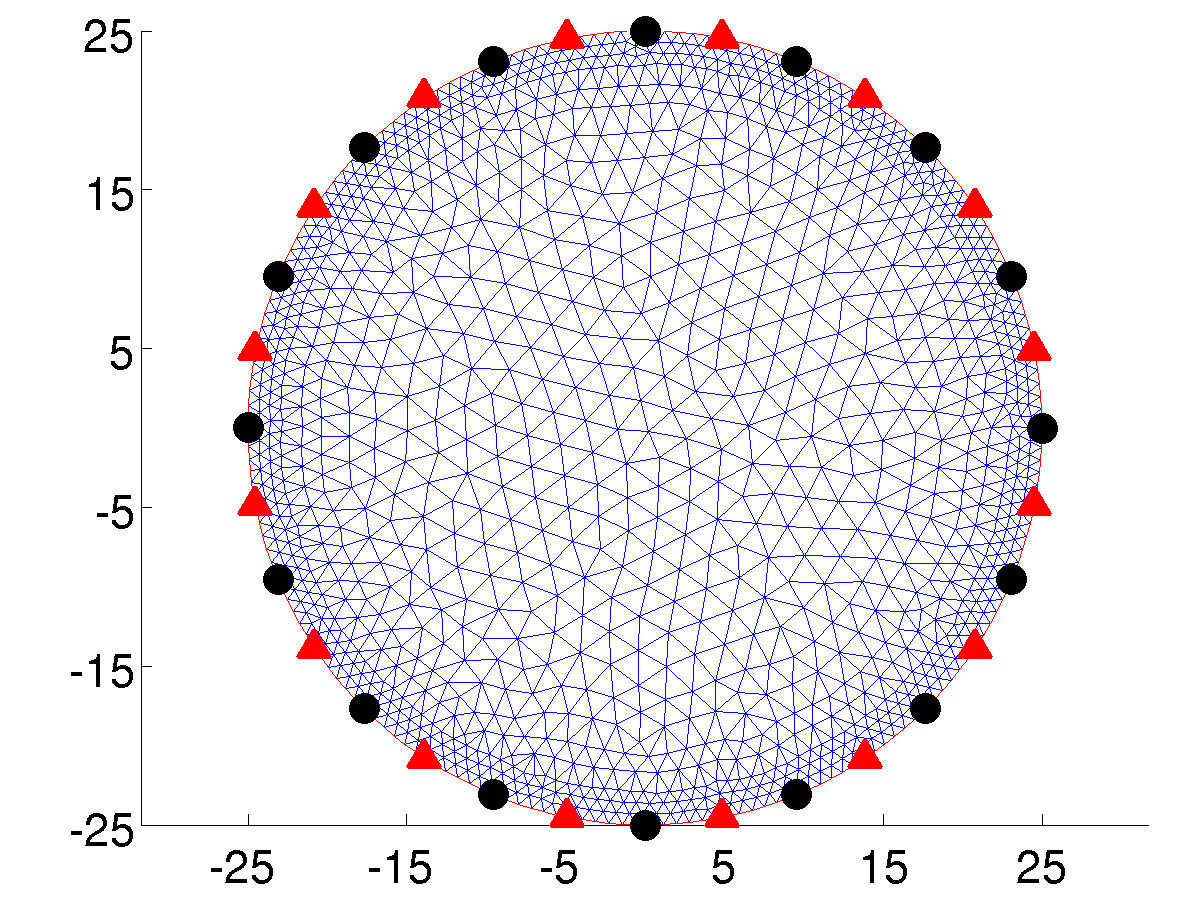}\hfill
  \includegraphics[width=0.33\textwidth]{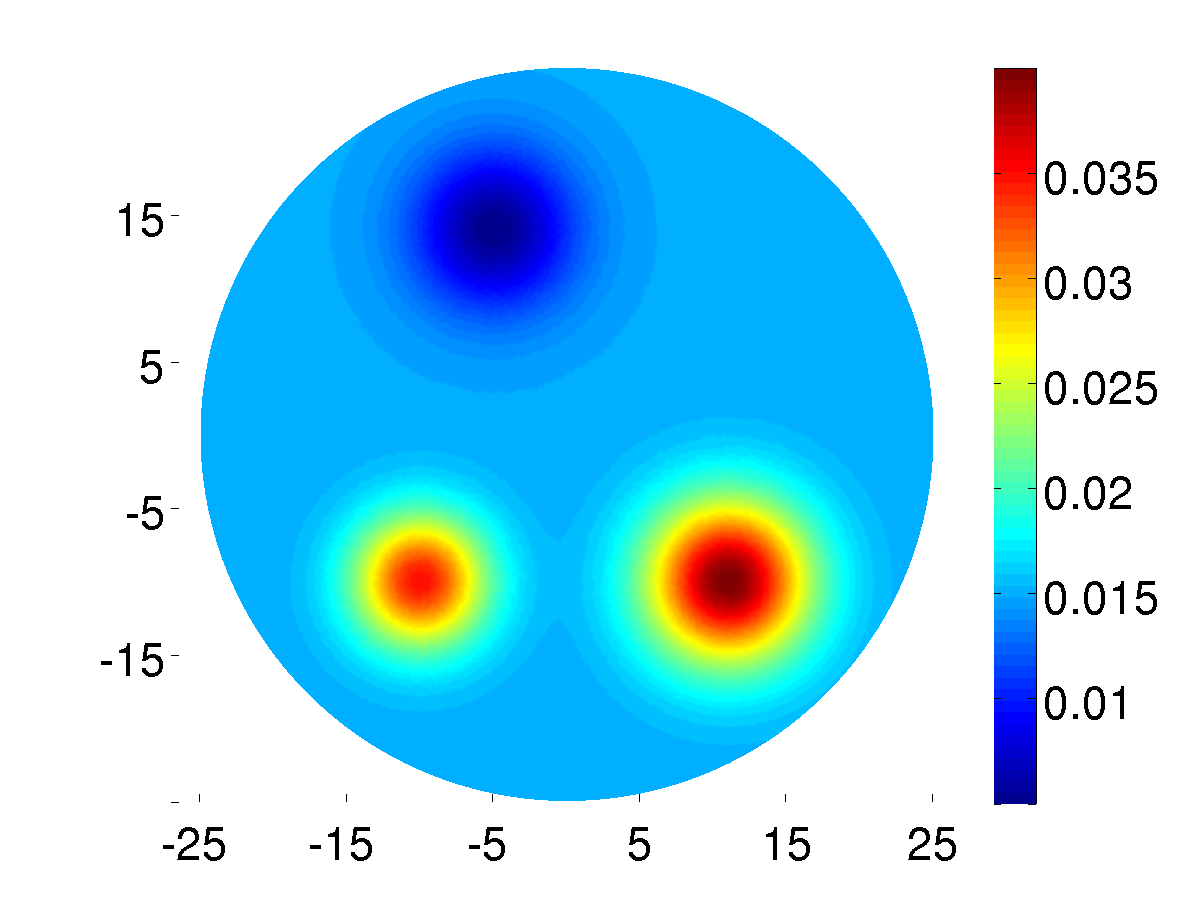}\hfill
  \includegraphics[width=0.33\textwidth]{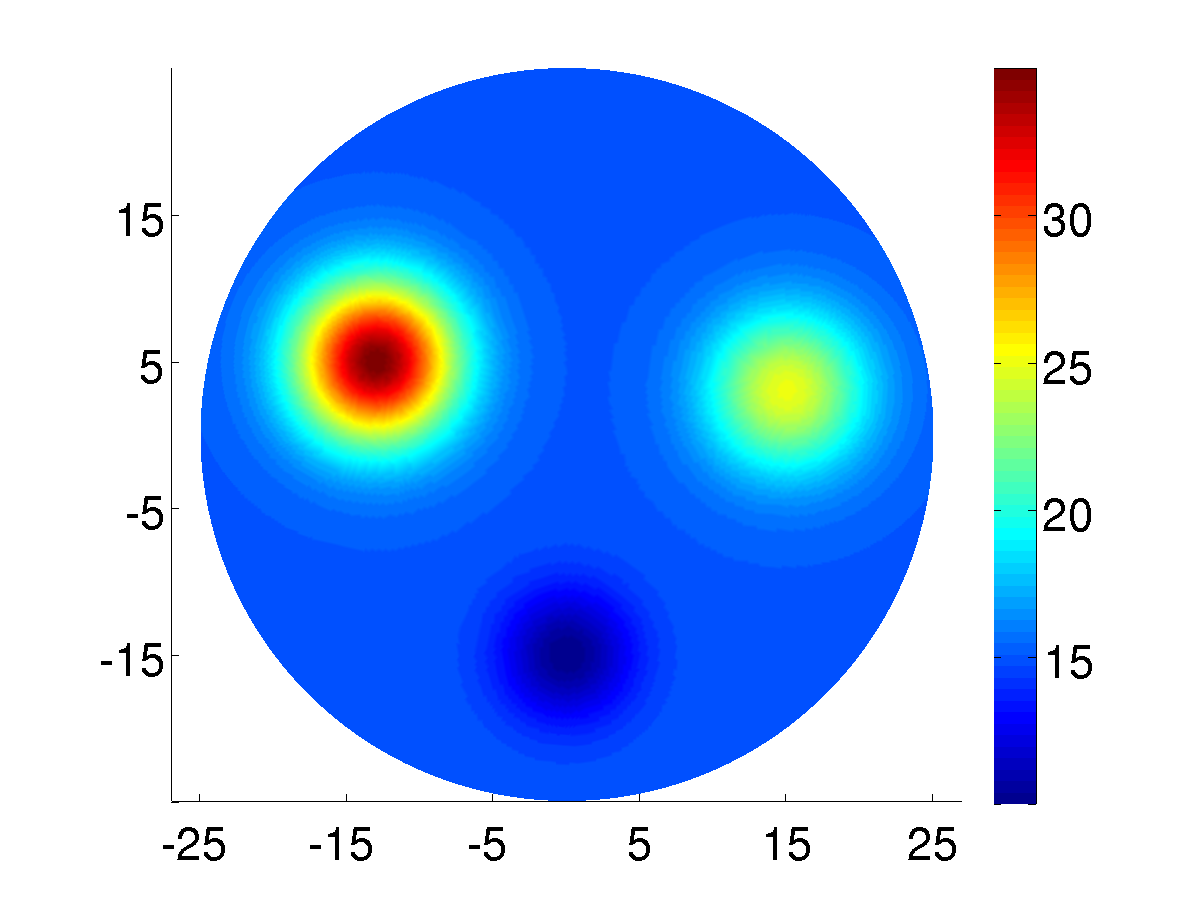}
  \caption{\label{fig:setup} Left: Grid with $1287$ vertices, blue circles denote the $16$ source positions, red triangles denote $16$ detector positions. Middle: True distribution of $\mu$. Right: True distribution of $\sigma$.}
\end{figure}


\subsection{Generation of Data and Non-uniqueness}
Note that our choice of parameters $\mu$ and $\sigma$ depicted in Figure~\ref{fig:setup} 
cannot be expected to satisfy the source condition \eqref{eq:source}.
To be able to observe convergence rates, we therefore compute in a first step a minimizer $(\mu^\dagger,\sigma^\dagger):=(\mu_{\alpha_{\min}},\sigma_{\alpha_{\min}})$ of the Tikhonov functional with $\alpha_{\min}=10^{-10}$. 
The result of this preprocessing step is depicted in Figure~\ref{fig:xad}. 
\begin{figure}[ht]
  \includegraphics[width=0.4\textwidth]{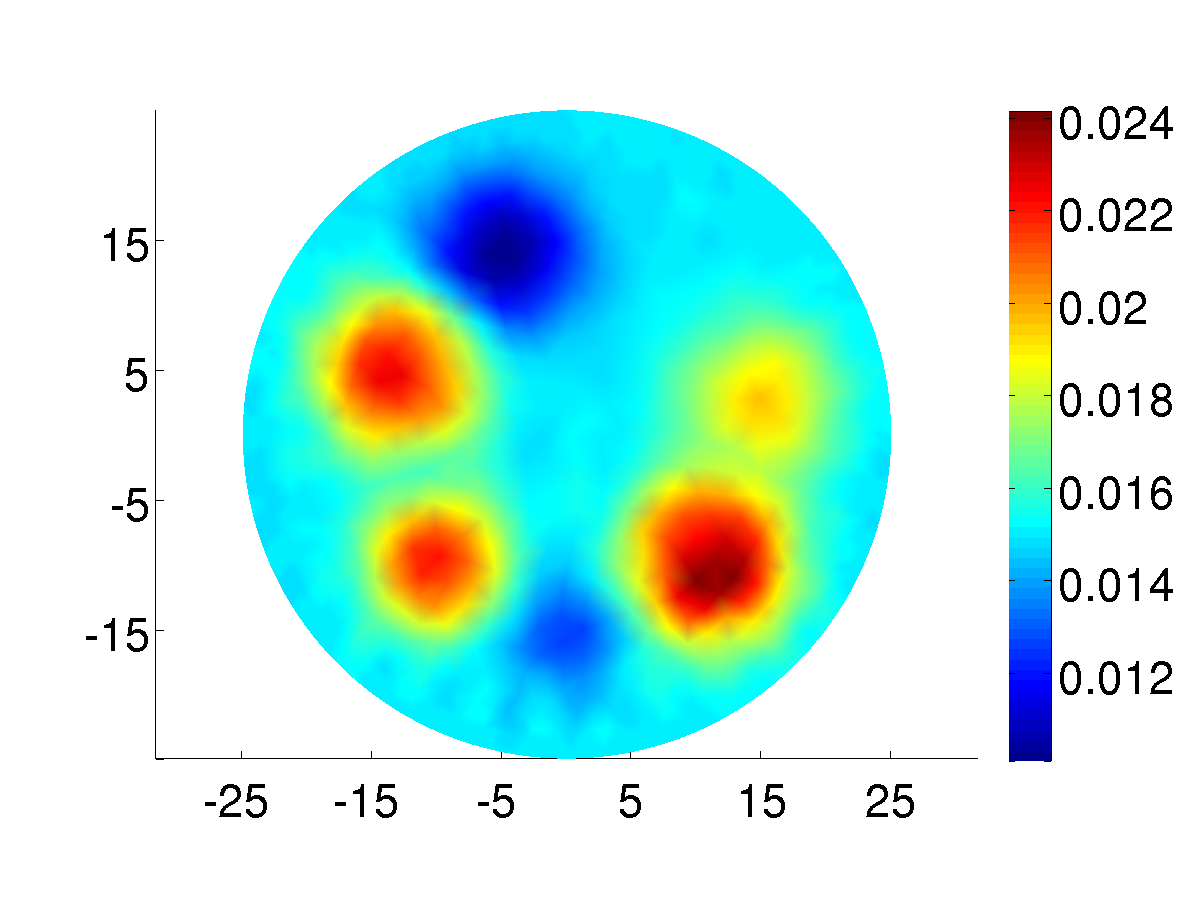}\quad
  \includegraphics[width=0.4\textwidth]{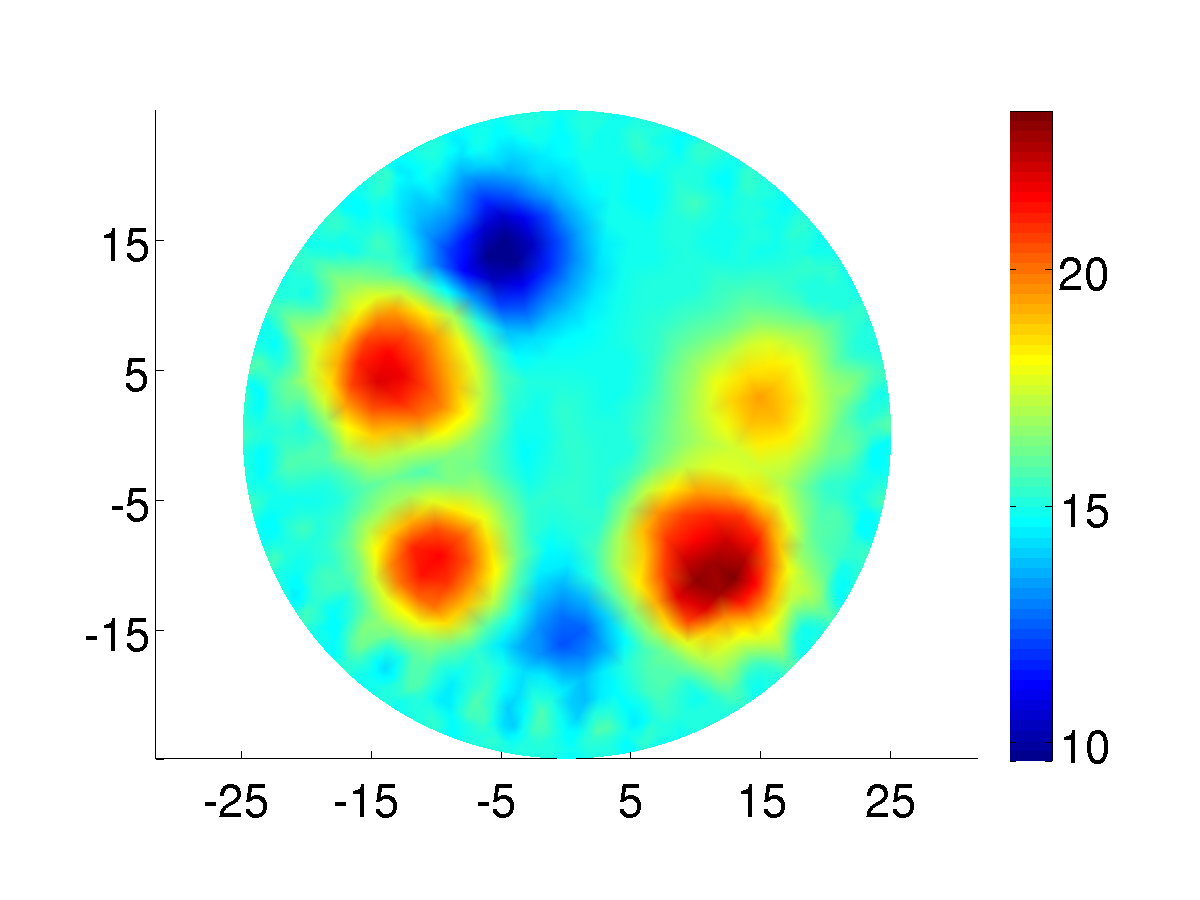}
  \caption{\label{fig:xad} Calibrated parameters $\mu^\dagger$ (left) and $\sigma^\dagger$ (right) obtained by minimizing the Tikhonov functional for initial guess $\mu_0=0.015$ and $\sigma_0=15$, $\alpha=10^{-10}$ and data $BS(\mu,\sigma)$ from Figure~\ref{fig:setup}.}
\end{figure}
Let us mention that we obtain different reconstructions $(\mu^\dagger,\sigma^\dagger)$ 
when changing the initial value $(\mu_0,\sigma_0)$, which is a clear indication of non-uniqueness in for the inverse problem \eqref{eq:ip_informal}; see also \cite{Arridge99} for a theoretical explanation.
Using the calibrated parameter $(\mu^\dag,\sigma^\dag)$ as truth-approximation, we then compute the measurements 
$\md = B S(\mu^\dag,\sigma^\dag)$ as in \eqref{eq:measurement}. The relative error in the data corresponding to the 
parameters depicted in Figure~\ref{fig:setup} and \ref{fig:xad} is less then $0.05$\%. 
This indicates the ill-posedness and possible non-uniqueness for the inverse problem.

\subsection{Convergence rates for minimizers}
In a first numerical test, we want to demonstrate the convergence of the minimizers 
$(\mu_\alpha,\sigma_\alpha)$ of the Tikhonov functional \eqref{eq:tikhonov2} towards the correct parameter pair $(\mu^\dag,\sigma^\dag)$ generated in the preprocessing step. 
We denote by
$$
{\rm res}_\alpha = \|BS(\mu_\alpha,\sigma_\alpha)-\md\|_{2},\qquad
{\rm err}_\alpha = \|(\mu_\alpha,\sigma_\alpha)-(\mu^\dagger,\sigma^\dagger)\|_{H^1(\R)},
$$
the observed residuals and errors in the regularized solutions. 
The convergence rates for the residual and the error can be seen in Figure~\ref{fig:rate}. 

\begin{figure}[ht]
  \includegraphics[width=0.4\textwidth]{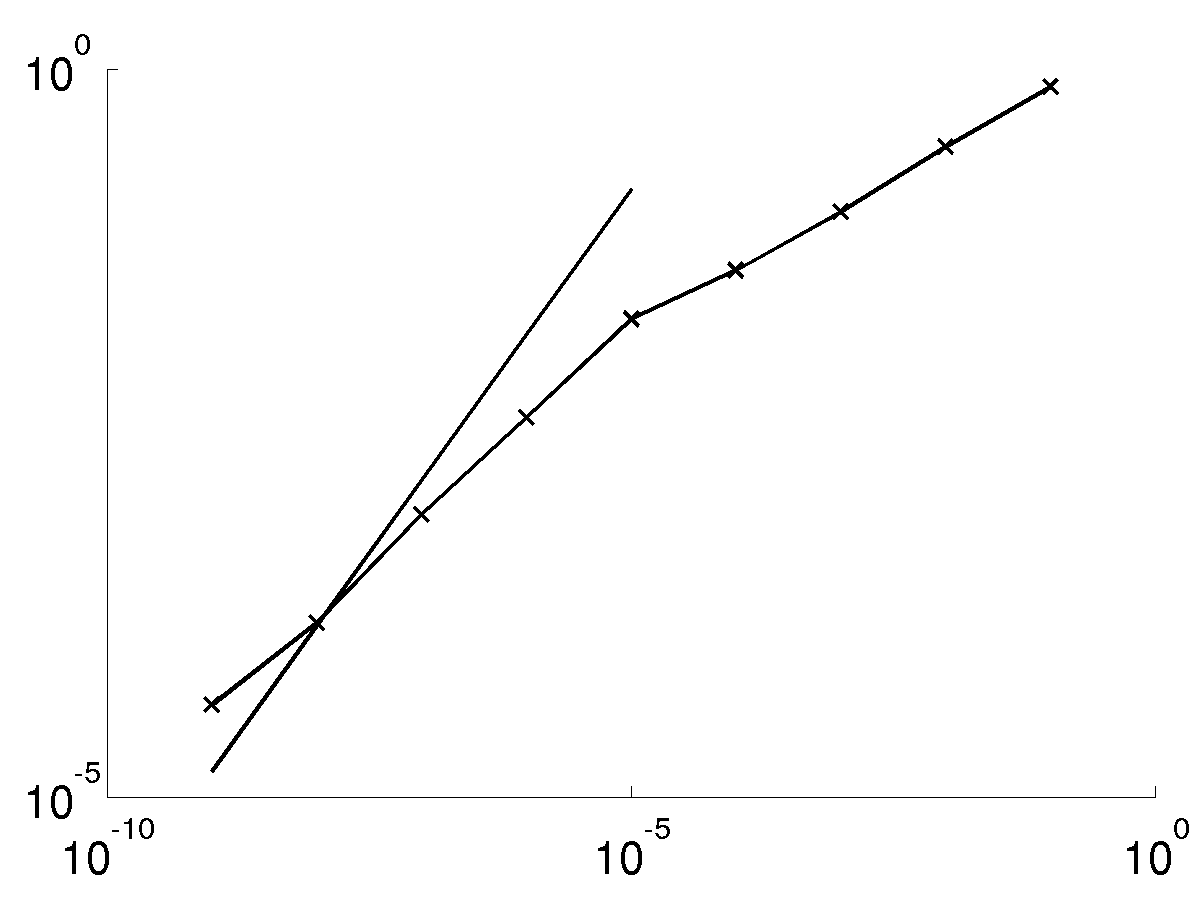}\qquad
  \includegraphics[width=0.4\textwidth]{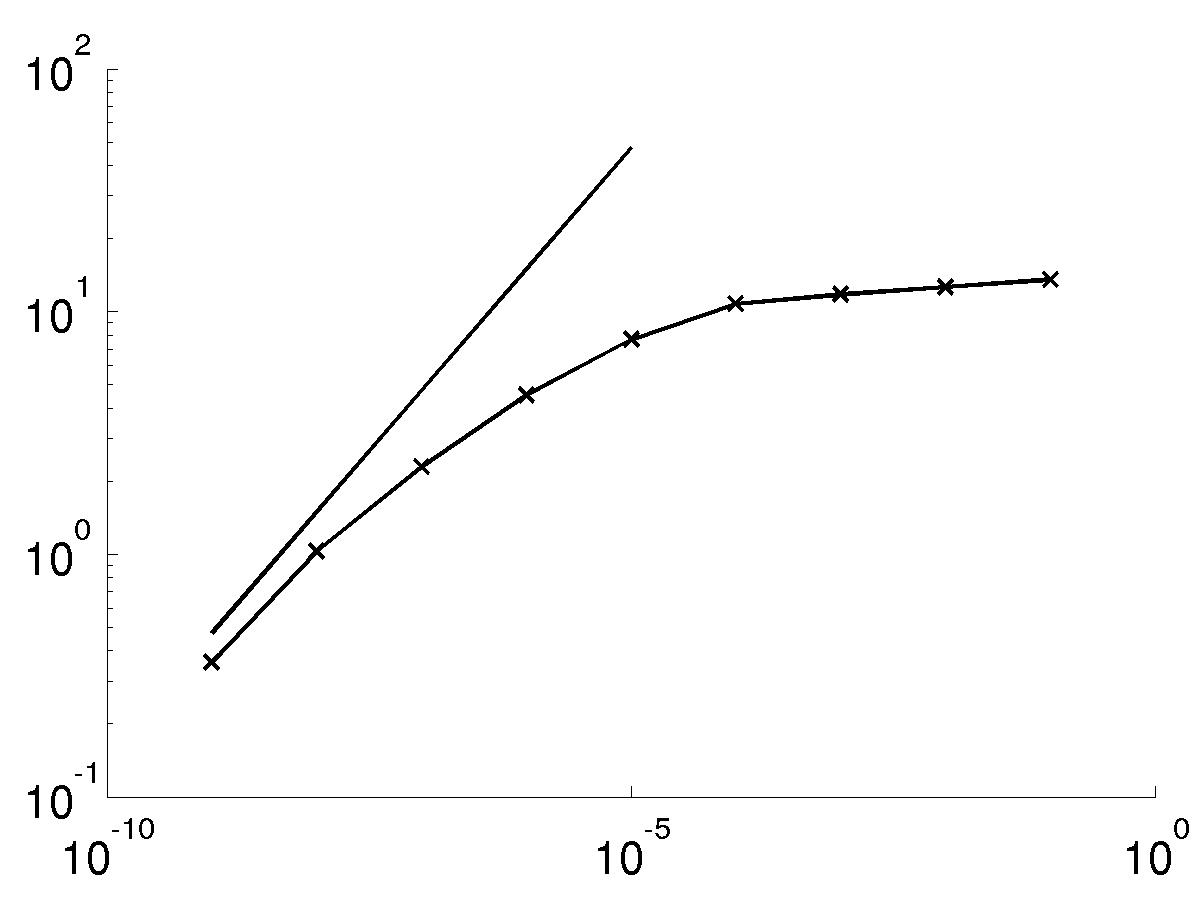}
  \caption{\label{fig:rate} Rates of convergence for minimizers of the Tikhonov functional. Left: ${\rm res}_\alpha$ (crosses) and $\mathcal{O}(\alpha)$ for $\alpha=10^{-n}$ and $n\in\{1,\ldots,9\}$. Right: ${\rm err}_\alpha$ (crosses) and $\mathcal{O}(\sqrt{\alpha})$.}
\end{figure}

As predicted by theory, we observe the asymptotic rate $\mathcal{O}(\sqrt{\alpha})$ for the error ${\rm err}_\alpha$. 
The convergence rate for the residuals ${\rm res}_\alpha$ is slightly less than the expected rate $\mathcal{O}(\alpha)$.

\subsection{Convergence of PGN method for $\alpha$ fixed}
With the second experiment, we would like to demonstrate the linear convergence of the PGN method to 
the minimizer of the Tikhonov functional.
To do so, we compute for $\alpha=10^{-5}$ the minimizers $(\mu_\alpha,\sigma_\alpha)$ by iterating the PGN method until convergence.
We then restart the iteration to create a sequence $(\mu_n,\sigma_n)$ of PGN iterates defined as in Section~\ref{sec:PGN}
with $\alpha_n=\max(\frac{1}{100}\frac{1}{2^n},\alpha)$. 
The residuals and the errors in the $n$th iteration given by
$$
{\rm res}_n^\alpha := \|BS(\mu_n,\sigma_n)-\md\|_{2},\qquad
{\rm err}_n^\alpha = \|(\mu_n,\sigma_n)-(\mu_\alpha,\sigma_\alpha)\|_{H^1}
$$
are depicted in Figure~\ref{fig:exp}. For comparison, we also display the theoretical convergence curve. 

\begin{figure}[ht]
  \includegraphics[width=0.4\textwidth]{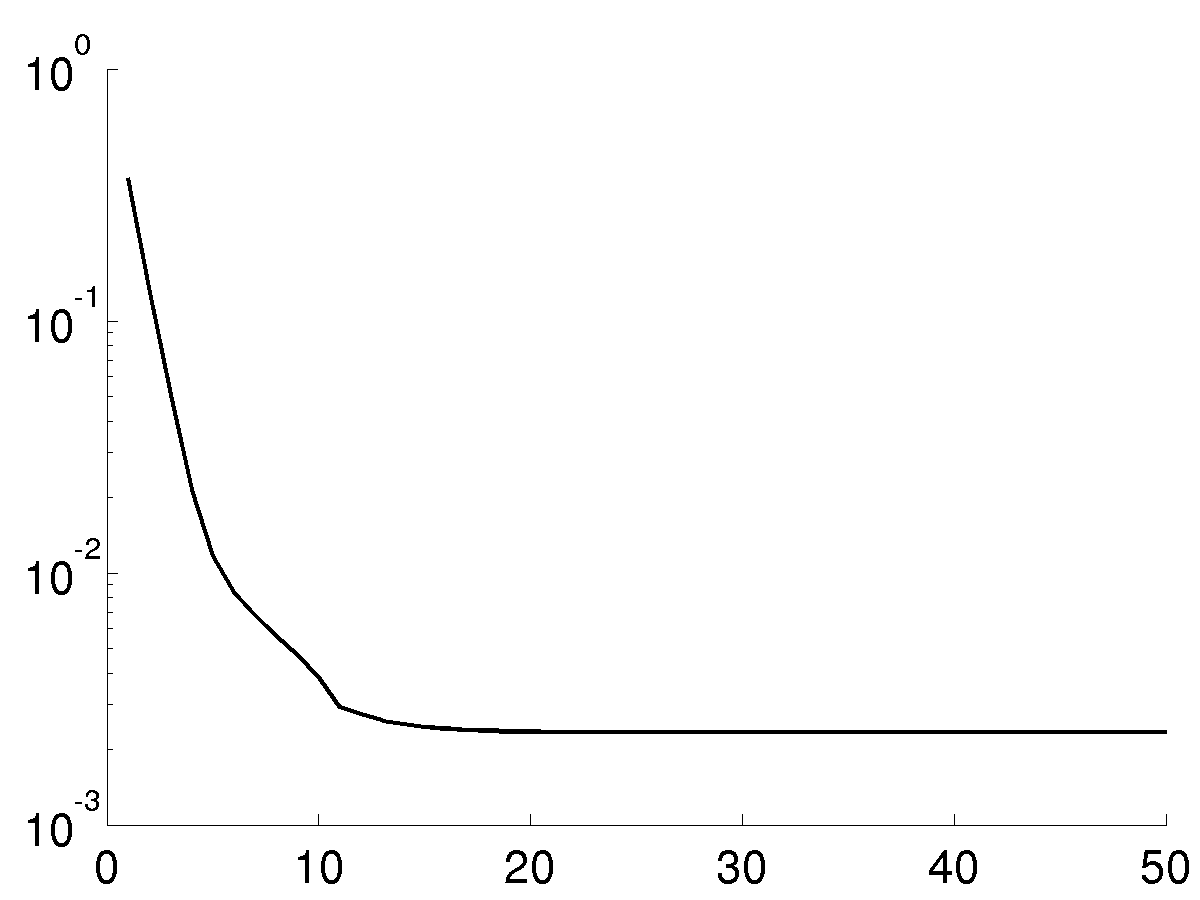}\quad
  \includegraphics[width=0.4\textwidth]{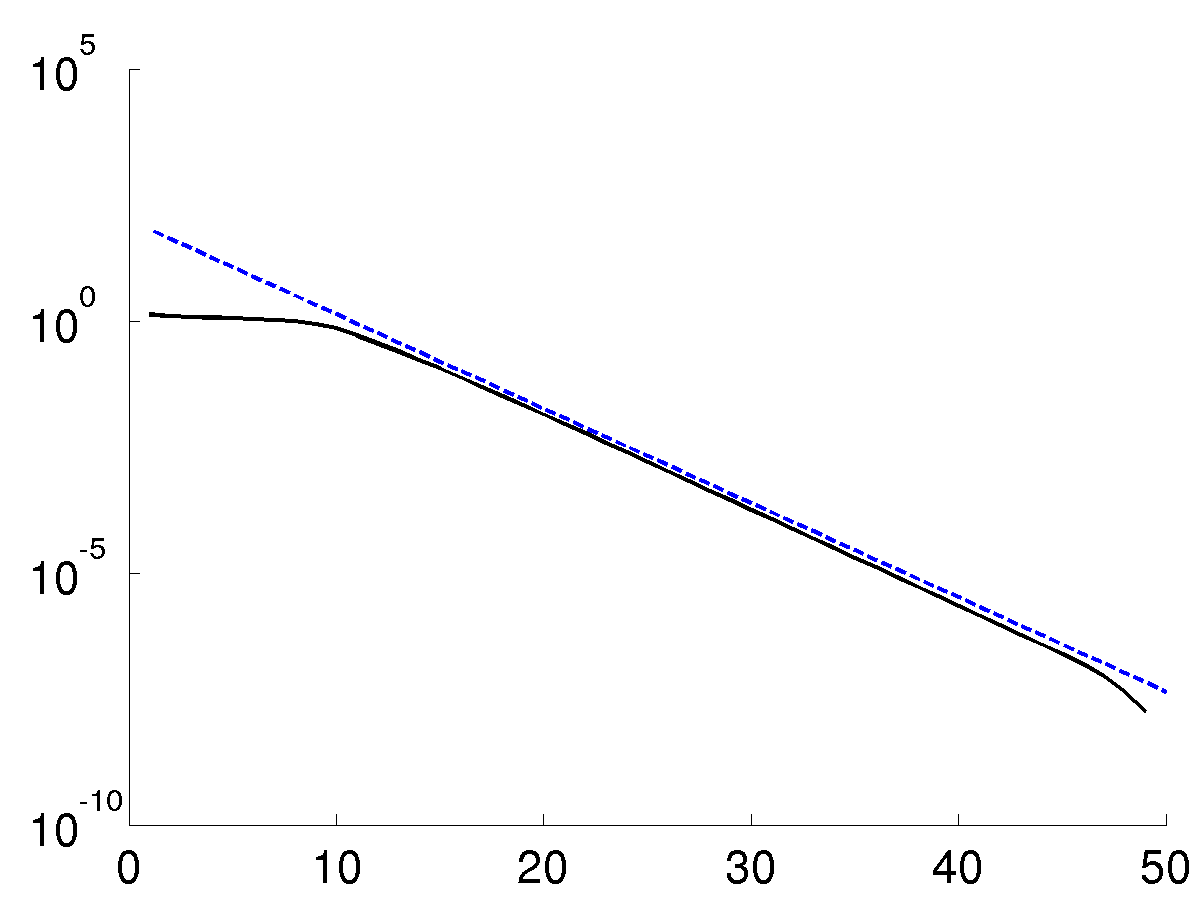}
  \caption{\label{fig:exp} Convergence of the PGN method for fixed $\alpha=10^{-5}$. Left: residual ${\rm res}_n^\alpha$. Right: linear convergence of ${\rm err}_n^\alpha$ and $(0.65)^n$ (dotted).}
  \end{figure}

In the first iterations, $\alpha_n$ is still rather large and the iterates stay within the vicinity of the initial guess. 
After $\alpha_n$ decreased sufficiently, the convergence of the error ${\rm err}_n^\alpha$ gets linear, i.e. ${\rm err}_n^\alpha \leq C \rho^n$ for some $0<\rho<1$. The residuals do not converge to zero here, since the minimizer 
$(\mu_\alpha,\sigma_\alpha)$ does not solve the inverse problem \eqref{eq:ip_informal} exactly. 
The residuals and the errors are however monotonically decreasing, which highlights the stability of the method.

\section{Conclusions}
In this paper we investigated numerical methods for reconstructing scattering and absorption rates in stationary radiative transfer from boundary observations. 
For a stable solution of this inverse problem, we considered Tikhonov regularization which leads to an optimal control problem constrained by an integro partial differential equation.
Using some sort of compactness provided by the averaging lemma, we were able to prove the weak continuity of the parameter-to-solution mapping. This allows us to show existence and stability of minimizers. We also established important differentiability properties which are required for the convergence of iterative minimization algorithms.
We discussed the convergence of a projected Gau\ss-Newton method.
Under the typical source condition, which is also required for nonlinear regularization theory, we could establish local convexity of the Tikhonov functional in the vicinity of minimizers, and thus obtained local linear convergence of the projected Gau\ss-Newton method.
It would be interesting to know, if convergence of iterative minimization algorithms can be shown without some sort of source condition.


\begin{thebibliography}{10}
\providecommand{\url}[1]{{#1}}
\providecommand{\urlprefix}{URL }
\expandafter\ifx\csname urlstyle\endcsname\relax
  \providecommand{\doi}[1]{DOI~\discretionary{}{}{}#1}\else
  \providecommand{\doi}{DOI~\discretionary{}{}{}\begingroup
  \urlstyle{rm}\Url}\fi

\bibitem{AcarVogel}
Acar, R., Vogel, C.R.: Analysis of bounded variation penalty methods for
  ill-posed problems.
\newblock Inverse Problems \textbf{10}, 1217--1229 (1994)

\bibitem{Arridge99}
Arridge, S.R.: Optical tomography in medical imaging.
\newblock Inverse Problems \textbf{15}(2), R41--R93 (1999)

\bibitem{BakKok04}
Bakushinsky, A.B., Kokurin, M.Y.: Iterative Methods for Approximate Solution of
  Inverse Problems, \emph{Mathematics and its Applications}, vol. 577.
\newblock Springer, Dordrecht (2004)

\bibitem{Bal08}
Bal, G.: Inverse transport from angularly averaged measurements and time
  harmonic isotropic sources.
\newblock In: A.L. Y.~Censor M.~Jiang (ed.) Mathematical Methods in Biomedical
  Imaging and Intesity-Modulated Radiation Therapy, CRM, pp. 19--35. Scuola
  Normale Superiore Pisa, Italy (2008)

\bibitem{BalJol08}
Bal, G., Jollivet, A.: Stability estimates in stationary inverse transport.
\newblock Inverse Probl. Imaging \textbf{2}(4), 427--454 (2008)

\bibitem{CaseZweifel67}
Case, K.M., Zweifel, P.F.: Linear transport theory.
\newblock Addison-Wesley Publishing Co., Reading (1967)

\bibitem{Cercignani:1988}
Cercignani, C.: The {B}oltzmann Equation and Its Applications.
\newblock Springer-Verlag, Berlin (1988)

\bibitem{Chandrasekhar60}
Chandrasekhar, S.: Radiative Transfer.
\newblock Dover Publications, Inc. (1960)

\bibitem{ChoulliStefanov98}
Choulli, M., Stefanov, P.: An inverse boundary value problem for the stationary
  transport equation.
\newblock Osaka J. Math. \textbf{36}(1), 87--104 (1998)

\bibitem{DL93vol6}
Dautray, R., Lions, J.L.: Mathematical Analysis and Numerical Methods for
  Science and Technology, Evolution Problems {II}, vol.~6.
\newblock Springer, Berlin (1993)

\bibitem{DiDoNaPaSi02}
Dierkes, T., Dorn, O., Natterer, F., Palamodov, V., Sielschott, H.: Fr\'echet
  derivatives for some bilinear inverse problems.
\newblock SIAM J. Appl. Math. \textbf{62}(6), 2092--2113 (2002)

\bibitem{Dorn98}
Dorn, O.: A transport-backtransport method for optical tomography.
\newblock Inverse Problems \textbf{14}, 1107--1130 (1998)

\bibitem{EggSch11}
Egger, H., Schlottbom, M.: Efficient reliable image reconstruction schemes for
  diffuse optical tomography.
\newblock Inv. Probl. Sci. Engrg. \textbf{19}, 155--180 (2011)

\bibitem{EggSch10:3}
Egger, H., Schlottbom, M.: A mixed variational framework for the radiative
  transfer equation.
\newblock Mathematical Models and Methods in Applied Sciences \textbf{03}(22),
  1150,014 (2012).
\newblock \doi{10.1142/S021820251150014X}.
\newblock \urlprefix\url{http://dx.doi.org/10.1142/S021820251150014X}

\bibitem{EggSch13LP}
Egger, H., Schlottbom, M.: An ${L}^p$ theory for stationary radiative transfer.
\newblock Applicable Analysis  (2013).
\newblock \doi{10.1080/00036811.2013.826798}

\bibitem{EHN96}
Engl, H.W., Hanke, M., Neubauer, A.: Regularization of inverse problems,
  \emph{Mathematics and its Applications}, vol. 375.
\newblock Kluwer Academic Publishers Group, Dordrecht (1996)

\bibitem{EnKuNeu89}
Engl, H.W., Kunisch, K., Neubauer, A.: Convergence rates for {T}ikhonov
  regularization of nonlinear ill-posed problems.
\newblock Inverse Problems \textbf{5}, 523--540 (1989)

\bibitem{GoLiPeSe88}
Golse, F., Lions, P.L., Perthame, B., Sentis, R.: Regularity of the moments of
  the solution of a transport equation.
\newblock Journal of Functional Analysis \textbf{76}(1), 110--125 (1988).
\newblock \doi{10.1016/0022-1236(88)90051-1}

\bibitem{HiPiUlUl09}
Hinze, M., Pinnau, R., Ulbrich, M., Ulbrich, S.: Optimization with {PDE}
  Constraints, \emph{Mathematical Modelling: Theory and Applications}, vol.~23.
\newblock Springer Science + Business Media B.V. (2009)

\bibitem{HoKaPoSch07}
Hofmann, B., Kaltenbacher, B., P\"oschl, C., Scherzer, O.: A convergence rates
  result for tikhonov regularization in banach spaces with non-smooth
  operators.
\newblock Inv. Prob. \textbf{23}, 987--1010 (2007)

\bibitem{KaltenbacherHofmann2010}
Kaltenbacher, B., Hofmann, B.: Convergence rates for the iteratively
  regularized gauss–newton method in banach spaces.
\newblock Inverse Problems \textbf{26}(3), 035,007 (2010).
\newblock \urlprefix\url{http://stacks.iop.org/0266-5611/26/i=3/a=035007}

\bibitem{KalNeu06}
Kaltenbacher, B., Neubauer, A.: Convergence of projected iterative
  regularization methods for nonlinear problems with smooth solutions.
\newblock Inverse Problems \textbf{22}, 1105--1119 (2006)

\bibitem{KaNeuSche08}
Kaltenbacher, B., Neubauer, A., Scherzer, O.: Iterative Regularized Methods for
  Nonlinear Ill-Posed Problems, \emph{Radon Series on Computational and Applied
  Mathematics}, vol.~6.
\newblock Walter de Gruyter (2008)

\bibitem{KalSchSch09}
Kaltenbacher, B., Sch\"opfer, F., Schuster, T.: Iterative methods for nonlinear
  ill-posed problems in {B}anach spaces: convergence and applications to
  parameter identification problems.
\newblock Inverse Problems \textbf{25}, 065,003 (19pp) (2009)

\bibitem{KloHie99}
Klose, A.D., Hielscher, A.H.: Iterative reconstruction scheme for optical
  tomography based on the equation of radiative transfer.
\newblock Med. Phys. \textbf{26}(8), 1698--1707 (1999)

\bibitem{Kondratyev69}
Kondratyev, K.Y.: Radiation in the Atmosphere.
\newblock Academic Press (1969)

\bibitem{LewisMiller84}
Lewis, E.E., Miller~Jr., W.F.: Computational Methods of Neutron Transport.
\newblock John Wiley \& Sons, Inc., New York Chichester Brisbane Toronto
  Singapore (1984)

\bibitem{McDowallStefanovTamason2010}
McDowall, S., Stefanov, P., Tamasan, A.: Stability of the gauge equivalent
  classes in inverse stationary transport.
\newblock Inverse Problems \textbf{26}(2), 025,006 (2010).
\newblock \urlprefix\url{http://stacks.iop.org/0266-5611/26/i=2/a=025006}

\bibitem{Peraiah04}
Peraiah, A.: An Introduction to Radiative Transfer -- Methods and applications
  in astrophysics.
\newblock Cambridge University Press (2004)

\bibitem{Ram03}
Ramlau, R.: {TIGRA} -- an iterative algorithm for regularizing nonlinear
  ill-posed problems.
\newblock Inverse Problems \textbf{19}, 433--465 (2003)

\bibitem{Resmerita2005}
Resmerita, E.: Regularization of ill-posed problems in banach spaces:
  convergence rates.
\newblock Inverse Problems \textbf{21}(4), 1303 (2005).
\newblock \urlprefix\url{http://stacks.iop.org/0266-5611/21/i=4/a=007}

\bibitem{ScherzerVariationalMethodsInImaging2009}
Scherzer, O., Grasmair, M., Grossauer, H., Haltmeier, M., Lenzen, F.:
  Variational Methods in Imaging.
\newblock Springer (2009)

\bibitem{ThesisSch}
Schlottbom, M.: On forward and inverse models in optical tomography.
\newblock Ph.D. thesis, RWTH Aachen (2011).
\newblock
  \urlprefix\url{http://darwin.bth.rwth-aachen.de/opus3/volltexte/2011/3857/}

\bibitem{SchusterKaltenbacherHofmannKazimierski2012}
Schuster, T., Kaltenbacher, B., Hofmann, B., Kazimierski, K.S.: Regularization
  Methods in Banach Spaces.
\newblock De Gruyter (2012)

\bibitem{TangHanHan2013}
Tang, J., Han, W., Han, B.: A theoretical study for {RTE}-based parameter
  identification problems.
\newblock Inverse Problems \textbf{29}(9), 095,002 (2013).
\newblock \urlprefix\url{http://stacks.iop.org/0266-5611/29/i=9/a=095002}

\bibitem{WrightArridgeSchweiger07}
Wright, S., Schweiger, M., Arridge, S.R.: Reconstruction in optical tomography
  using {$P_N$} approximations.
\newblock Meas. Sci. Technol. \textbf{18}, 79--86 (2007)

\end{thebibliography}

\end{document}